\newcommand{\R}{\mathbb{R}}
\newtheorem{algorithm}{Algorithm}[section]
\begin{document}

\title{An extragradient algorithm\\ for quasiconvex equilibrium problems  without monotonicity\thanks{The research of the first author was supported by the Vietnam Academy of Science and Technology under Grant Number CTTH00.01/22-23.}
}

\titlerunning{Mathematical programming techniques}        

\author{Le Hai Yen \and Le Dung Muu 
}


\institute{Corresponding author: L.H.Yen \at
              Affiliation: Institute of Mathematics, VAST, Hanoi, Viet Nam\\
              \email{lhyen@math.ac.vn}           
           \and
           L.D.Muu \at
             Affiliation: TIMAS, Thang Long University and Institute of Mathematics, VAST, Hanoi, VietNam
              \\\email{ldmuu@math.ac.vn}
}

\date{Received: date / Accepted: date}

\maketitle

\begin{abstract}
We attempt to provide  an   algorithm for approximating a solution of   the  quasiconvex    equilibrium problem  that was proved to exist a solution by  K. Fan 1972. The proposed algorithm is an iterative procedure, where the search direction at each iteration is a normal-subgradient, while the step-size is updated avoiding Lipschitz-type conditions. The algorithm is convergent to a $\rho$- quasi-solution with any positive $\rho$ if  the bifunction $f$ is semistrictly quasiconvex in its second variable, while it converges to the solution when $f$  is strongly quasiconvex. Neither monotoniciy nor Lipschitz property is required.  
\keywords{ Equilibria \and Quasiconvexity \and Normal subgradient \and Linesearch}
\subclass{90C33 \and 65K10 \and 90C26}
\end{abstract}

\section{Introduction}
\label{intro}
Let $C$ be a nonempty closed convex set in $\R^n$ and $f: \R^n \times \R^n \to \R$ be a given bifunction such that $f(x,x) = 0$ for every $x \in C$. We consider the problem
$$ \textnormal{Find } \ x^*\in C:\  f(x^*,y)  \geq 0 \quad \forall y\in C.  \eqno(EP)$$
In what follows we call Problem (EP) a convex (resp. quasiconvex) equilibrium problem if the function $f(x,.)$ is convex (resp. quasiconvex) on $C$ for any $x\in C$.
The inequality appeared in Problem (EP) first was used by Nikaido and Isoda in 1955 \cite{Ni1} in a non-cooperative convex game.  
In recent years this problem attracted much attention of many authors as it contains a lot numbers of important problems such as optimization, variational inequality, Kakutani fixed point, Nash equilibrium problems and others as special cases, see e.g. the interesting monographs \cite{Bi2018,Ko2001},   the papers \cite{Bi2015,HD2020,HMS2020,HM2011,Ma2000,MQ2009,QMH2008,So2011,St2016} and the references cited therein.

Many algorithms have been developed for solving (EP) under the assumption that the bifunction is convex and subdifferentiable with respect  to the second variable while the first one being fixed.  Almost all of these algorithms are based upon the auxiliary problem principle, which states that  when $f(x,.)$ is convex, subdifferentiable  on $C$, then the solution-set of (EP) coincides with that of the regularized  problem
$$\text{find} \ x^* \in C: f_\rho(x^*,y):= f(x^*,y) + \frac{1}{2\rho}\|y-x^*\|^2 \geq 0 \  \forall y\in C, \eqno(REP) $$
with any $\rho > 0$.  The main advantage of the latter problem is that  the regularized bifunction $f_\rho$ is strongly convex in the second variable when the first one being fixed.

A basic method for solving Problem (REP) is the extragradient one, where at each iteration $k$, having $x^k\in C$, a main operation is of solving the mathematical subprogram 
$$\min\{ f_\rho (x^k,y):= f(x^k,y) + \frac{1}{2\rho}\|y-x^k\|^2: y \in C\}. \eqno(MP)$$
Thanks to convexity of the function $f(x^k,.)$ this problem  is a strongly convex program, and therefore it is uniquely solvable.
However, when $f(x,.)$ is quasiconvex rather convex, Problem (MP), in general, is  not strongly convex, even not quasiconvex.

In the seminal paper \cite{Fa1972} in 1972,  K. Fan called  Problem (EP) a minimax inequality and established solution existence results for it, when $C$ is convex, compact and $f$ is quasiconvex on $C$.  To our best knowledge, up to now there does not exist an algorithm for finding a solution of the problem considered in \cite{Fa1972} by K. Fan.

  It worth mentioning that
  when  
$f(x, ·)$ is convex and subdiﬀerentiable on $C$, the equilibrium problem (EP) can be reformulated as the following multivalued variational inequality 
$$ \text{find} \ x^* \in C, v^* \in F(x^*) : \langle v^*, x - x^*\rangle \geq 0 \quad \forall x\in C, \eqno(MVI)$$
 where $F(x^*) = \partial_2 f(x^*,x^*)$  with $\partial_2 f (x^*, x^*)$ being the diagonal subdiﬀerential of $f$ at $x^*$, that is the subdifferential of the convex function $f (x^*, ·)$ at $x^*$. In the case $f(x, ·)$ is semi-strictly quasiconvex rather
than convex, Problem (EP) can take the form of (MVI) with $F(x) := Na_{f(x,x)} \setminus \{0\}$,
where $ Na_{f(x,x)}$ is the normal cone of the adjusted sublevel set of the function $f(x, ·)$
at the level $f(x, x)$, see \cite{Au1}. More details about the links between equilibrium
problems and variational inequalities can be found in \cite{Au2}. 

Based upon the auxiliary principle, different methods   such as the fixed point, projection, extragradient, regularization, gap function  ones have been developed
for solving equilibrium problem (EP) by using mathematical programming techniques, where the bifunction involved possesses certain monotonicity properties. Almost all of them require that the bifunction is convex with respect to its second variable, see e.g. the comprehensive monograph \cite{Bi2018} and the references therein. 

In \cite{CLS2016}, the authors studied an infeasible interior proximal algorithm for
solving quasiconvex equilibrium problems with polyhedral constraints. At each iteration $k$  of this algorithm, having $x^k$  it requires  globally solving  a nonconvex mathematical programming problem, where the objective function is the sum of $f(x^k,.)$ and a strongly convex function defined by a distance function. The convergence of this algorithm is proved under an assumption depending on the iterates $x^k$ and $x^{k+1}$. Very recently, Iusem and Lara \cite{IL2021} propose  an  algorithm for solving quasiconvex equilibrium problem (EP). Their algorithm can be considered as a standard proximal point method for optimization problem applied to the quasiconvex function $f(x,.)$. The convergence    has been proved when $f$ is pseudomonotone, Lipschitz-type and strongly quasi-convex.

  In our recent papers \cite{YM2020,YM2021}, by using the normal subdifferential of  quasiconvex functions, we have proposed projection algorithms for Problem (EP) when the bifunction is pseudo and paramonotone.  

In this paper, we continue our work by modifying  the linesearch extragradient algorithm commonly used for   convex equilibrium problem (EP) to solve quasiconvex equilibrium problems without requiring any monotonicity and Lipschitz-type  properties of the bifunction involved. More precisely, after the next section that contains preliminaries on normal subdifferentials of a quasiconvex function, in the third section, we describe an extragradient linesearch algorithm for this quasiconvex  equilibrium problem. Then by observing that the solution set  of the regularized problem coincides with  that of the Minty (dual)  one for semi-strictly quasiconvex bifunction, we  prove that the algorithm   converges to a quasi (prox) solution when the bifunction involved is semi-strictly quasiconvex in its second variable, which is the unique solution when the bifunction is strongly quasiconvex in its second variable.    We close the paper by presenting some computational results showing the efficiency and behavior of the proposed algorithm.

\section{Preliminaries on quasiconvexity, normal subdifferentials and monotonicity}
\begin{definition}(\cite{Au2,GL2021,Ma1969})
Let $C$ be a convex set in $\R^n$.
Let $\varphi:\R^n \rightarrow \R \cup \{+\infty\}$ such that $C \subseteq dom \varphi$. The function $\varphi$ is said to be
\begin{itemize}
\item[(i)] \textbf{quasiconvex} on $C$  if and only if for every $x,y \in C$ and $\lambda \in \left[0,1\right]$, one has
\begin{equation}
\varphi[(1-\lambda)x +\lambda y] \leq \max[\varphi(x), \varphi(y)].\label{eq1}
\end{equation}
\item[(ii)] \textbf{semi-strictly quasi-convex} on $C$ if it is quasiconvex and for any  every $x,y \in C$ and $\lambda \in \left(0,1\right)$, one has
\begin{equation}
	\varphi(x) <\varphi(y) \Rightarrow \varphi[(1-\lambda)x +\lambda y]< \varphi(y).
\end{equation}

\item[(iii)] \textbf{strongly quasiconvex} on $C$ with modulus $0< \gamma < \infty$ if  for every $0\leq \lambda \leq 1$  
$$ \varphi(\lambda x +(1-\lambda) y) \leq \max\{\varphi(x), \varphi(y)\} -\lambda (1-\lambda) \frac{\gamma}{2}\|x-y\|^2 \ \forall x, y \in C.$$
\item[(iv)] \textbf{essentially quasiconvex} on $C$ if it is quasiconvex and every its local minimum is a global one.
\item[(v)] \textbf{pseudoconvex} on on $C$ if it differentiable on an open set containing $C$ and
$$\langle \nabla \varphi (x), y-x\rangle \geq 0 \Rightarrow \varphi (y) \geq \varphi (x) \ \forall x, y \in C.$$ 
\item[(vi)] \textbf{ proximal convex} on $C$ with modulus  $\alpha > 0$ (shortly $\alpha$-prox-convex) if  
$prox_{\varphi}(C,z) \not=\emptyset$ 
and there exists $\alpha > 0$ such that
$$ \ p \in prox_{\varphi}(C,z) \Rightarrow \alpha \langle x-p, z-p\rangle \leq \varphi (x)- \varphi(p) \  \forall x\in C,$$
where $prox_{\varphi}(C,z)$ is the proximal mapping of $\varphi$ at $z$ on $C$, that is
$$prox_{\varphi}(C,z) := \text{argmin}\{\varphi(x) := \varphi (y)+ \frac{1}{2}\|y-x\|^2: y\in C\}.$$
\end{itemize}
\end{definition}
It is well known that strongly quasiconvex $\Rightarrow$  semi-strictly quasiconvex $\Rightarrow$ essentially quasiconvex  $\Rightarrow $ quasiconvex.

Clearly,  $\varphi$ is quasiconvex if and only if, for every $\alpha \in \R$, the strict level set at the level $\alpha$, that is 
$$L_\alpha: =\{ y: \varphi (y) < \alpha\} $$
is convex .

 Recall that the (Hadamard)  directional derivative of a   function $\varphi$ at $x$ with direction $d  $ is defined as
 $$\varphi'(x,d) := \lim\inf_{t \searrow  0, u\to d} \frac{\varphi(x + tu) -f(x)}{t}.$$
 A point $x\in C$ is said to be a stationary point of $\varphi$ on $C$ if $\varphi'(x,d) \geq 0$ for every $d$. A point  is  a minimizer of $\varphi$ on $C$ then it is a stationary point. The converse direction is true when $\varphi$ is convex or pseudo convex on $C$.

 The Greenberg-Pierskalla   subgradient  of a quasiconvex function \cite{GP1}  is defined as
$$\partial^{GP} \varphi(x):= \{  g \in \R^{n} : \langle g, y - x\rangle > 0  \Rightarrow \varphi (y) \geq \varphi(x)\}.$$
A variation of this subdifferential is the star-subdifferential that is defined as
$$\partial^*\varphi (x):= \{ g\in \R^{n}: \langle g,y-x\rangle {\color{blue}<} 0 \ \forall y\in L_\varphi(x)\},$$
where $L_\varphi(x)$ stands for the strict level set of $\varphi$ with level $\varphi (x)$.  
 It is well known \cite{GP1,Pe1}  that if $\varphi$ is continuous on $\R^n$, then $\partial^*\varphi(x)$ contains nonzero vector and 
$$\partial^*\varphi (x) \cup\{0\} = cl(\partial^* \varphi(x) ) = \partial^{GP}\varphi (x),$$ 
where $cl(A)$ stands for the closure of  the set $A$. Thus the star-subdifferential is also called the normal-subdifferential. Various calculus rules for normal subdifferential can be found in \cite{Pe1}.
  
  The following concepts are commonly used in the field of equilibrium problem \cite{Bi2018}.
  
\begin{definition} Let $f: C\times C \to \R$ and $S\subset C$
\begin{itemize}
\item[(i)] $f$ is said to be strongly monotone on $S$ with modulus $\eta \geq 0$ (shortly $\eta$-strongly monotone) if
 $$f(x,y) + f(y,x) \leq - \eta \|x-y\|^2 \  \forall x, y \in S.$$
  If $\eta = 0$ it is also called monotone on $S$.
	   
\item[(ii)] $f$ is said to be paramonotone on $C$ if $x$ is a solution of (EP) and $y\in C$, $f(x,y) = f(y,x) = 0$ then $y$ is also a solution of (EP).

\item[(iii)]  $f$ is said to be  pseudomonotone on $C$ if $f(x,y)  \geq 0$ then $f(y,x)\leq 0$ for every $x,y \in C$. 

\item[(iv)]  $f$ is said to be Lipschitz-type on $C$ if
$$f(x,y) +f(y,z) \geq f(x,z) -L_1\|x-y\|^2 - L_2\|y-z\|^2 \ \forall x,y,z \in C.$$
Clearly, in the case of optimization when $f(x,y) := \varphi (y) - \varphi (x)$ it possesses  both the paramonotonicity and Lipschitz-type property.

\end{itemize} 

\end{definition}

\section{Algorithm and its convergence} A problem closely related to Problem (EP) is the  Minty (or dual) equilibrium  one that  is defined
as
$$\text{Find } z^*\in C \text{ such that } f(y, z^*) \leq 0. \eqno{(DEP)}$$
Let us denote by $S$ and $S_d$ the solution set of (EP) and (DEP) respectively. It is clear that if $f$ is pseudomonotone on $C$ then $S\subseteq S_d$. Conversely, $S_d \subseteq S$ if $f$ is upper semi-continuous with respect to the first variable and convex with respect to the second variable (see \cite{Mu1984}). 

In what follows we always suppose that
 $f(.,y)$ is upper semi-continuous for any $y\in C$.

In the following lemma, we   prove that the inclusion $S_d \subseteq S$  still holds true when $f$ is semi-strictly quasiconvex with respect to the second variable.

\begin{lemma}\label{M1}
	Assume that
	 $f(x,.)$ is   semi-strictly quasiconvex  on $C$ for any $x\in C$.
		 Then $S_d \subseteq S$.\label{lem1}
\end{lemma}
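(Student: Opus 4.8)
The plan is to argue by contradiction: suppose $z^* \in S_d$ but $z^* \notin S$, so there exists $y \in C$ with $f(z^*, y) < 0$. Since $f(z^*, z^*) = 0$, this means $f(z^*, y) < f(z^*, z^*)$. The idea is to exploit this strict inequality together with semi-strict quasiconvexity of $f(z^*, \cdot)$ to produce a point on the segment $[z^*, y]$ that violates the defining inequality of $(DEP)$.

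First I would fix such a $y$ and set $x_\lambda := (1-\lambda) z^* + \lambda y$ for $\lambda \in (0,1)$, noting $x_\lambda \in C$ by convexity. Applying semi-strict quasiconvexity of $\varphi := f(z^*, \cdot)$ with the strict inequality $\varphi(y) < \varphi(z^*)$ (taking the roles so that the smaller value is at $y$), I get $\varphi(x_\lambda) < \varphi(z^*) = 0$, i.e. $f(z^*, x_\lambda) < 0$ for every $\lambda \in (0,1)$. Now, since $z^* \in S_d$ means $f(w, z^*) \le 0$ for all $w \in C$, apply this with $w = x_\lambda$: $f(x_\lambda, z^*) \le 0$. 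So far this is consistent; the contradiction must come from letting $\lambda \to 1$ (or more naturally, using upper semicontinuity of $f(\cdot, z^*)$, which is assumed: "$f(.,y)$ is upper semi-continuous for any $y \in C$").

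The decisive step: consider instead letting $\lambda \searrow 0$, so $x_\lambda \to z^*$. We know $f(x_\lambda, z^*) \le 0$ for all $\lambda \in (0,1)$. I want to combine $f(z^*, x_\lambda) < 0$ with some relation linking $f(x_\lambda, z^*)$ and $f(z^*, x_\lambda)$ — but we have no monotonicity. So the cleaner route is the standard Minty-type argument using upper semicontinuity in the first variable applied to $f(\cdot, z^*)$ along the segment. Actually the sharpest version: since $z^* \in S_d$, we have $f(x_\lambda, z^*) \le 0$; I'd then use semi-strict quasiconvexity of $f(x_\lambda, \cdot)$ — wait, that changes the first argument. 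The main obstacle is precisely this: the Minty-to-Stampacchia passage for equilibria classically needs convexity of $f(x,\cdot)$ to use the inequality $0 = f(x_\lambda,x_\lambda) = f(x_\lambda, (1-\lambda)z^* + \lambda y) \le (1-\lambda) f(x_\lambda, z^*) + \lambda f(x_\lambda, y)$, forcing $f(x_\lambda, y) \ge 0$ and then passing to the limit $\lambda \to 0$ via u.s.c. to get $f(z^*, y) \ge 0$.

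Here, with only semi-strict quasiconvexity, I would replace the convex-combination inequality by the following: from $f(x_\lambda, z^*) \le 0 = f(x_\lambda, x_\lambda)$ and semi-strict quasiconvexity of $f(x_\lambda, \cdot)$, if the inequality $f(x_\lambda, z^*) < f(x_\lambda, x_\lambda)$ were strict we would conclude $f(x_\lambda, x_\mu) < f(x_\lambda, x_\lambda) = 0$ for $\mu$ between $\lambda$ and $1$ — tracking carefully which endpoint carries the strictly smaller value and using that $x_\lambda, z^*, y$ are collinear with $x_\mu = (1-\lambda)z^* + \lambda y$ lying appropriately. By taking $\mu \to 1$ we reach $f(x_\lambda, y) \le 0$ in the limit via u.s.c. of $f(\cdot, y)$; but also from semi-strict quasiconvexity pushing toward $y$ we in fact get $f(x_\lambda, y) < 0$ is impossible to maintain together with $f(x_\lambda, z^*) \le 0$ at an interior point where $x_\lambda$ itself is a convex combination, since quasiconvexity gives $0 = f(x_\lambda, x_\lambda) \le \max\{f(x_\lambda, z^*), f(x_\lambda, y)\}$, forcing $f(x_\lambda, y) \ge 0$. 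Finally, letting $\lambda \searrow 0$ and invoking upper semicontinuity of $f(\cdot, y)$ yields $f(z^*, y) \ge \limsup_{\lambda} f(x_\lambda, y) \ge 0$, contradicting $f(z^*, y) < 0$. Hence $z^* \in S$, completing the proof. The one genuinely delicate point to get right is the bookkeeping of strict versus non-strict inequalities when applying the semi-strict quasiconvexity hypothesis, and ensuring the collinearity of the three points is used correctly so that $x_\lambda$ is interior to a segment whose endpoints are $z^*$ and $y$.
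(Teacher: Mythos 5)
Your final argument is essentially the paper's proof: by contradiction take $y$ with $f(z^*,y)<0$, look at a point $x_\lambda$ on the open segment between $z^*$ and $y$, use $f(x_\lambda,z^*)\le 0$ from $z^*\in S_d$ together with $f(x_\lambda,x_\lambda)=0$ and the (semi-strict) quasiconvexity of $f(x_\lambda,\cdot)$ to force $f(x_\lambda,y)\ge 0$, and bridge back to $z^*$ via upper semicontinuity of $f(\cdot,y)$ (the paper uses the u.s.c.\ up front to fix one $\lambda$ near $z^*$ with $f(x_\lambda,y)<0$; you instead let $\lambda\searrow 0$ at the end --- the same argument in a different order). Two points need tightening, though neither is fatal. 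First, your decisive sentence justifies ``forcing $f(x_\lambda,y)\ge 0$'' by plain quasiconvexity alone; that deduction fails in the borderline case $f(x_\lambda,z^*)=0$, where $0\le\max\{f(x_\lambda,z^*),f(x_\lambda,y)\}$ is compatible with $f(x_\lambda,y)<0$. There you must invoke semi-strict quasiconvexity exactly as the paper's Case 2 does: $f(x_\lambda,y)<0=f(x_\lambda,z^*)$ would give $f(x_\lambda,x_\lambda)<0$, a contradiction; so the two-case split (strict vs.\ equality in $f(x_\lambda,z^*)\le 0$) should be written out. Second, the intermediate digression (the $x_\mu$ construction with ``$\mu\to 1$ \ldots via u.s.c.\ of $f(\cdot,y)$'') is not valid --- the semicontinuity hypothesis is in the first variable, while there you are moving the second argument, and the semi-strict conclusion is drawn on the wrong sub-segment --- but that passage is not needed and can simply be deleted.
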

\begin{proof}
	Let $z^*\in S_d$. If $z^*\not\in S$, then there would exist  $y\in C$ such that $f(z^*,y)<0$.
	
	For $\lambda \in (0,1)$, set $y_{\lambda}= \lambda z^*+(1-\lambda)y$.  Since $f(.,y)$ is upper semi-continuous, there exists $0<\lambda<1$ such that $f(y_{\lambda},y)<0$.
		Since  $z^*\in S_d$, $f(y_{\lambda}, z^*)\leq 0$.
		
	We consider two cases
	\begin{itemize}
		\item[$\bullet$] Case 1: $f(y_{\lambda}, z^*)< 0$. By the quasiconvexity of $f(y_{\lambda},.)$,
		$$0=f(y_{\lambda}, y_{\lambda})\leq \max\{f(y_{\lambda}, z^*), f(y_{\lambda}, y)\}<0.$$
		This is a contradiction.
			\item[$\bullet$] Case 2: $f(y_{\lambda}, z^*)= 0$. By the semi-strictly quasiconvexity of $f(y_{\lambda},.)$ and the fact that $f(y_{\lambda},y)<0=f(y_{\lambda}, z^*) $, which would imply
		$$0=f(y_{\lambda}, y_{\lambda})< \max\{f(y_{\lambda}, z^*), f(y_{\lambda}, y)\}=0.$$
		This is also a contradiction.
	\end{itemize}

\end{proof}
 
The following algorithm can be considered as a modification of the one in \cite{QMH2008}   for solving Problem (EP) when the bifunction is quasiconvex with respect to its second variable.\\
 
\begin{algorithm}
	Take $\alpha,\theta\in (0,1)$  and two   sequences  $\{\rho_k\}_{k\geq 0}$,$\{\sigma_k\}_{k\geq 0}$ of positive numbers such that
	\begin{eqnarray}
		 \rho_k \  \text{ nonincreasingly converges to some} \ \overline{\rho}  > 0, \nonumber\\
		 \quad\sum_{k=0}^{\infty} \sigma_k=\infty, \quad \sum_{k=0}^{\infty} \sigma_k^2 <\infty.\nonumber
	\end{eqnarray}
	\textbf{Initialization:} Pick $x^0\in C$.
	\\\textbf{Iteration $k = 0,1...$}
	\begin{itemize}
		\item[$\bullet$] Find $y^k$ such that
		\begin{equation}
			y^k \in \text{argmin}_{y\in C} \left\{ f(x^k,y) +\frac{1}{2\rho_k} \|x^k-y\|^2\right\}.
			\label{alg1}
		\end{equation}
		\item[$\bullet$] If $y^k=x^k$, then \textbf{stop}: $x^k$ is a stationary point or a solution.\\
 If $y^k\not=x^k$, find the smallest positive integer $m$ such that $z^{k,m}=(1-\theta^m)x^k +\theta^m y^k$ and
		\begin{equation}
			f(z^{k,m},x^k) -f(z^{k,m},y^k) \geq \frac{\alpha}{2\rho_k} \|y^k-x^k\|^2, \label{alg2}
		\end{equation}
	and set $z^k=z^{k,m}$.
		\item[$\bullet$] Take 
		\begin{equation}
			g^k \in \partial^*_2f(z^k,x^k):=\left\{g\in \R^n: \langle g, y-x^k \rangle<0 \text{ if } f(z^k,y)< f(z^k,x^k)\right\},\label{alg3}
		\end{equation}
	and normalize it to obtain $\|g^k\|=1$ ($g^k \not = 0$, see Proposition 3.2 below).
	\\ Compute 
	\begin{equation}
		x^{k+1}=P_C(x^k -\sigma_k g^k). \label{alg4}
	\end{equation}

	\end{itemize}
If $x^{k+1}=x^k$ then \textbf{stop}: $x^k$ is a solution, else set $k:=k+1$.

\end{algorithm}

\begin{remark}\label{3.1}
\begin{itemize}
\item[(i)] The existence of solution for (\ref{alg1}) can be guaranteed under the assumption that the function $f(x^k,.)$ is lower semicontinuous and $2-$weakly coercive (see \cite{GL2021}), \textit{i.e.},
$$\liminf_{\|y \|\rightarrow +\infty} \frac{f(x^k,y)}{\|y\|^2} \geq 0.$$
If $C$ is bounded, the $2-$weakly coercivity assumption can be dropped. If $f(x,.)$ is strongly quasi-convex on $C$, then it is 
$2-$weakly coercive (see \cite{IL2021} Lemma 2).

 Another example is the $\alpha$-proximal convex function introduced in \cite{GL2021}, where it  has been proved  that if $h$ is $\alpha$-proximal  convex on $C$ then $Prox_h(C,z)$ is a singleton. It can be seen from Lemma 2.2 in \cite{IL2021} that if $f(x,.)$ is strongly quasi-convex on $C$ for any $x\in C$, then it is proximal convex on $C$.
 Note that if $h$ is convex, then   $\alpha =1$.  

\item[(ii)] When $f$ is quasiconvex rather convex on $C$,   problem (\ref{alg1}), in general is not convex, even not quasiconvex. However, in some special cases (see examples below) one can choose reqularization parameter $\rho_k$ such that problem (\ref{alg1}) is strongly convex, and therefore it is uniquely solvable.

\end{itemize}

\end{remark}

In contrast to the convex case, in the algorithm, $y^k = x^k$ does not necessarily implies that $x^k$ is a solution. But,  part (i) of the following proposition shows that it is a solution 
restricted on a part of $C$, while in the rest part it is only a stationary point.
                                                   
\begin{proposition}
	Suppose that  $y^k=x^k$. 
	\begin{itemize}
	\item[(i)] If $x^k$ is not a solution of (EP), that means
	\begin{equation}
		\Omega(x^k):=\left\{ y\in C: f(x^k,y) <0\right\}.\nonumber
	\end{equation} is nonemty, then for any $y\in \Omega(x^k)$
$$f'_{x^k}(x^k,y-x^k)=0,$$
where $f_{x^k}:= f(x^k,.)$.
 \item[(ii)] If $f(x^k,.)$ is pseudoconvex on $C$ or strongly quasiconvex  on $C$, then  $x^k$ is a solution of (EP).
 \end{itemize}\label{prop2}
\end{proposition}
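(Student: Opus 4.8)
The plan is to read optimality information off the subproblem~\eqref{alg1} and feed it into the assumed (quasi/pseudo/strong) convexity of $f(x^k,\cdot)$. The first and central observation is that $y^k=x^k$ says exactly that $x^k$ minimizes over $C$ the regularized function $g(y):=f(x^k,y)+\frac{1}{2\rho_k}\|x^k-y\|^2$. By the fact recalled in Section~2 that a minimizer on $C$ is a stationary point, $g'(x^k,d)\ge 0$ for every $d$, in particular for $d=y-x^k$ with $y\in C$. Splitting $g=f_{x^k}+\psi$ with $\psi(y):=\frac{1}{2\rho_k}\|x^k-y\|^2$, the term $\psi$ is continuously differentiable with $\nabla\psi(x^k)=0$, hence Hadamard differentiable at $x^k$ with $\psi'(x^k,d)=0$; the sum rule for Hadamard directional derivatives then gives $g'(x^k,d)=f'_{x^k}(x^k,d)$. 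Thus we obtain the key inequality
$$ f'_{x^k}(x^k,y-x^k)\ \ge\ 0 \qquad \forall\, y\in C. \eqno(\star)$$

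For (i), fix $y\in\Omega(x^k)$, so $f(x^k,y)<0=f(x^k,x^k)$. For $t\in(0,1)$, quasiconvexity of $f(x^k,\cdot)$ gives $f\bigl(x^k,\,x^k+t(y-x^k)\bigr)\le\max\{f(x^k,x^k),f(x^k,y)\}=0$, hence the difference quotient $\frac{1}{t}\bigl[f(x^k,x^k+t(y-x^k))-f(x^k,x^k)\bigr]\le 0$. Since the Hadamard $\liminf$ defining $f'_{x^k}(x^k,y-x^k)$ is no larger than the $\liminf$ along the ray with $u\equiv y-x^k$ held fixed as $t\searrow 0$, we get $f'_{x^k}(x^k,y-x^k)\le 0$; combining with $(\star)$ yields $f'_{x^k}(x^k,y-x^k)=0$.

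For (ii): if $f(x^k,\cdot)$ is pseudoconvex it is differentiable, $(\star)$ becomes $\langle\nabla_2 f(x^k,x^k),y-x^k\rangle\ge 0$ for all $y\in C$, and pseudoconvexity gives $f(x^k,y)\ge f(x^k,x^k)=0$ for all $y\in C$, i.e.\ $x^k\in S$. If $f(x^k,\cdot)$ is strongly quasiconvex with modulus $\gamma$ and $\Omega(x^k)\neq\emptyset$, pick $y\in\Omega(x^k)$ and apply the defining inequality of strong quasiconvexity to the pair $(x^k,y)$ with weight $\lambda=1-t$: this gives $f(x^k,x^k+t(y-x^k))\le -t(1-t)\frac{\gamma}{2}\|x^k-y\|^2$, so the same ray argument produces $f'_{x^k}(x^k,y-x^k)\le-\frac{\gamma}{2}\|x^k-y\|^2<0$, contradicting $(\star)$; hence $\Omega(x^k)=\emptyset$ and $x^k\in S$. (This last case is also immediate from part (i) together with the strong-quasiconvexity estimate.)

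The hard part is not conceptual but a matter of bookkeeping with the Hadamard directional derivative: $(\star)$ is the ``$\ge 0$'' content (minimizer $\Rightarrow$ stationary), whereas the bounds used in (i) and in the strongly quasiconvex case are ``$\le$'' estimates valid only along the single fixed ray $u\equiv y-x^k$, so one has to be careful that the full Hadamard $\liminf$ is dominated by the ray $\liminf$ in order to chain the two inequalities. Beyond that, the statement reduces to the observation that quasiconvexity alone already pins $f'_{x^k}(x^k,\cdot)$ to zero in the direction of any strictly better feasible point, while under pseudoconvexity (through the gradient) or strong quasiconvexity such a direction cannot exist at all, which is precisely the asymmetry between a mere stationary point and a genuine solution.
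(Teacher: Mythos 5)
Your proof is correct and rests on the same two ingredients as the paper's: the minimality of $x^k$ in the subproblem (\ref{alg1}) and the generalized convexity of $f(x^k,\cdot)$; the differences are in packaging. For part (i) the paper squeezes the ray difference quotient directly: semi-strict quasiconvexity gives the quotient $<0$, while plugging $y_\lambda=x^k+\lambda(y-x^k)\in C$ into the subproblem inequality gives the quotient $\ge -\frac{\lambda}{2\rho_k}\|y-x^k\|^2$, and letting $\lambda\searrow 0$ yields the equality. You obtain the lower bound instead from the Section 2 statement that a minimizer on $C$ is stationary, combined with the sum rule for the quadratic term (whose gradient vanishes at $x^k$); this is the same content in a different wrapper, and your upper bound uses only plain quasiconvexity, slightly weaker than the paper's appeal to semi-strictness. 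Your pseudoconvex case matches the paper's argument (and corrects its sign slip: the first-order condition should read $\langle\nabla_2 f(x^k,x^k),y-x^k\rangle\ge 0$). For the strongly quasiconvex case the paper argues directly: it substitutes $y=\lambda x+(1-\lambda)x^k$ into $0\le f(x^k,y)+\frac{1}{2\rho_k}\|y-x^k\|^2$, applies the strong quasiconvexity inequality, and chooses $\lambda>0$ so small that $\frac{\lambda^2}{2\rho_k}-\gamma\frac{\lambda(1-\lambda)}{2}<0$, which forces $f(x^k,x)\ge 0$ for all $x\in C$; your contradiction via the strictly negative directional derivative $-\frac{\gamma}{2}\|x^k-y\|^2$ is the limiting form of the same estimate and is equally valid, with the paper's version being the more quantitative (it exhibits an admissible $\lambda$ explicitly). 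One caveat, which applies equally to the paper's own proof: the lower bound on the full Hadamard $\liminf$ (your $(\star)$) is covered only by the paper's recalled claim that a constrained minimizer is stationary in every direction $d$, since the subproblem inequality by itself controls the quotient only along feasible rays; so you introduce no gap beyond what the paper already accepts.
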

 
\begin{proof}
\begin{itemize}
\item[(i)] For $y\in \Omega(x^k)$, set $d=y-x^k$. For $\lambda\in \left(0,1\right)$, set $y_{\lambda}= x^k +\lambda d = \lambda y +(1-\lambda)x^k$. Since $f(x^k,x^k) = 0$, by the semi-strictly quasiconvexity of $f(x^k,.)$, we have
$$f(x^k, y_{\lambda})< 0.$$
So,
\begin{equation}
	\frac{f(x^k,x^k+\lambda d) -f(x^k,x^k)}{\lambda}<0.\label{prop11}
\end{equation}

From $$x^k = y^k\in \text{argmin}\{ f(x^k,y) + \frac{1}{2\rho_k}\|y-x^k\|^2: y\in C\},$$ it follows that for any $y\in C$,
\begin{equation}
	f(x^k,y) +\frac{1}{2\rho_k} \| y -x^k\|^2\geq 0.\nonumber
\end{equation}
Let $y=y_{\lambda}$, then
\begin{equation}
	f(x^k,y_{\lambda}) +\frac{1}{2\rho_k} \lambda^2\nonumber \|y- x^k\|^2\geq 0.
\end{equation}
Therefore,
\begin{equation}
	\frac{f(x^k,x^k+\lambda d) -f(x^k,x^k)}{\lambda}\geq -\lambda\frac{1}{2\rho_k}\|d\|^2.\label{prop12}
\end{equation}
By combining (\ref{prop11}) and (\ref{prop12}) and let $\lambda \rightarrow 0^+$, we obtain
$f'_{x^k}(x^k,d)=0.$

\item[(ii)] Now, assume that $f(x^k,.)$ is pseudoconvex on $C$, then $f(x^k,.)$ is differentiable on an open set containing $C$ and for any $y,y'\in C$, we have 
$$\nabla_2 f(x^k,y)(y'-y) \geq 0 \Rightarrow f(x^k,y') \geq f(x^k,y).$$
From $x^k=y^k$, it implies that $\langle\nabla_2f(x^k,x^k),y-x^k \rangle \leq 0$ for every $y\in C$. Therefore, $f(x^k,y) \geq 0$ for $y\in C$.

 If  $f(x^k,.)$ is strongly quasiconvex,  then subproblem (\ref{alg1})    is uniquely solvable. Since $x^k =y^k$ with $y^k$ being the solution of subproblem (\ref{alg1}), we have
\begin{equation}\label{M3}
 0 \leq f(x^k, y) + \frac{1}{2\rho_k} \|y-x^k\|^2  \ \forall y\in C.
\end{equation}
Let $ y:= \lambda x + (1- \lambda )x^k$ with any $x\in C$ and $ \lambda \in [ 0, 1]$. Then applying (\ref{M3}),
 by the strong quasiconvexity of $f(x^k,.)$,  we obtain
$$0\leq f(x^k, \lambda x + (1- \lambda )x^k)+ \frac{1}{2\rho_k}\|\lambda x + (1- \lambda )x^k - x^k\|^2 $$
$$\leq \max\{f(x^k,x^k), f(x^k,x) \} -\lambda (1-\lambda)\frac{\gamma}{2}\|x-x^k\|^2  +\frac{1}{2\rho_k}\|\lambda x + (1- \lambda )x^k-x^k\|^2.$$
Thus for any $\lambda \in [0,1] $ we have
$$0\leq \max\{f(x^k,x),0\}+\Big [ \frac{\lambda^2}{2\rho_k} - \gamma \frac{\lambda(1-\lambda)}{2} \Big] \|x-x^k\|^2\  \forall x\in C.$$
Since  $\rho_k \searrow  \overline{\rho} > 0$, one can choose $\lambda > 0$ small enough  so that  
$ \frac{\lambda^2}{2\rho_k} - \gamma \frac{\lambda(1-\lambda)}{2} < 0 $. Hence
$f(x^k,x) \geq 0$ for every $x\in C$.

\end{itemize}
\end{proof}
\begin{proposition}
Assume that $f(.,y)$ is continuous on $C$ for any $y\in C$. If $y^k \not=x^k$ then the following statements hold:
	\begin{itemize}
		\item[(i)] There exists a positive integer $m$ satisfying (\ref{alg2}).
\item[(ii)] If   $f(x,.)$ is semi-strictly quasiconvex on $C$ for any $x\in C$, then  $f(z^k,x^k) >0$. 
		\item[(iii)] $0\not\in \partial^*_2 f(z^k,x^k)$. 
	\end{itemize}
\label{prop3}
\end{proposition}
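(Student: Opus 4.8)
I would prove the three parts in turn; parts (i) and (iii) are soft, and the only real content is in (ii), which is where the semi-strict quasiconvexity is used.

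For (i) I would run the usual Armijo-type contradiction argument. Suppose no positive integer $m$ satisfies (\ref{alg2}), that is, $f(z^{k,m},x^k)-f(z^{k,m},y^k)<\frac{\alpha}{2\rho_k}\|y^k-x^k\|^2$ for every $m$. Since $\theta\in(0,1)$, $\theta^m\to 0$ and hence $z^{k,m}=(1-\theta^m)x^k+\theta^m y^k\to x^k$; using the continuity of $f(\cdot,x^k)$ and $f(\cdot,y^k)$ together with $f(x^k,x^k)=0$, letting $m\to\infty$ yields $-f(x^k,y^k)\le\frac{\alpha}{2\rho_k}\|y^k-x^k\|^2$. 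On the other hand, since $x^k\in C$ is feasible for (\ref{alg1}), optimality of $y^k$ forces $f(x^k,y^k)+\frac{1}{2\rho_k}\|x^k-y^k\|^2\le f(x^k,x^k)=0$, i.e. $-f(x^k,y^k)\ge\frac{1}{2\rho_k}\|x^k-y^k\|^2$. Dividing the resulting chain of inequalities by $\|y^k-x^k\|^2>0$ (here $y^k\ne x^k$ is crucial) gives $1\le\alpha$, contradicting $\alpha\in(0,1)$.

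For (ii) I would first observe that, $y^k\ne x^k$ making the right-hand side of (\ref{alg2}) strictly positive, one has $f(z^k,x^k)>f(z^k,y^k)$. Writing $z^k=(1-t)x^k+t y^k$ with $t=\theta^m\in(0,1)$, quasiconvexity of $f(z^k,\cdot)$ gives $0=f(z^k,z^k)\le\max\{f(z^k,x^k),f(z^k,y^k)\}=f(z^k,x^k)$, so $f(z^k,x^k)\ge 0$, and it only remains to rule out $f(z^k,x^k)=0$. If that held, then $f(z^k,y^k)<0=f(z^k,x^k)$, and applying the semi-strict quasiconvexity of $f(z^k,\cdot)$ to the pair $(y^k,x^k)$ with $\lambda=1-t\in(0,1)$ — noting $(1-\lambda)y^k+\lambda x^k=t y^k+(1-t)x^k=z^k$ — would give $f(z^k,z^k)<f(z^k,x^k)=0$, contradicting $f(z^k,z^k)=0$. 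Hence $f(z^k,x^k)>0$.

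For (iii) I would note that (\ref{alg2}) with $y^k\ne x^k$ already gives $f(z^k,y^k)<f(z^k,x^k)$, so the strict level set $L_{f(z^k,\cdot)}(x^k)$ contains $y^k$ and is nonempty; were $0\in\partial^*_2 f(z^k,x^k)$, the defining property in (\ref{alg3}) applied to $y=y^k$ would demand $\langle 0,y^k-x^k\rangle<0$, which is impossible, so $0\notin\partial^*_2 f(z^k,x^k)$. The main (mild) obstacle is part (ii): one must be careful to extract the strict inequality $f(z^k,x^k)>f(z^k,y^k)$ from the linesearch rule and to verify that $z^k$ lies in the open segment between $x^k$ and $y^k$, so that semi-strict — not merely plain — quasiconvexity can be invoked to upgrade $f(z^k,x^k)\ge 0$ to $f(z^k,x^k)>0$; the limit passage in (i) similarly hinges on the continuity of $f$ in its first argument combined with the defining optimality of $y^k$.
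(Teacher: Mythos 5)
Your proposal is correct and follows essentially the same route as the paper: the same Armijo-type contradiction for (i) (pass to the limit in the failed linesearch inequality and compare with the optimality of $y^k$ at $y=x^k$ to force $\alpha\ge 1$), and semi-strict quasiconvexity of $f(z^k,\cdot)$ applied along the segment through $z^k$ for (ii) — though the paper applies it in one step to get $0=f(z^k,z^k)<f(z^k,x^k)$ directly, so your preliminary ``$\ge 0$ then rule out equality'' detour is unnecessary. The only genuine (and harmless) variation is in (iii), where you use $y^k$ as the witness in the strict level set, which needs only the linesearch inequality, whereas the paper uses $z^k$ via part (ii); your version is marginally more economical since it does not invoke the semi-strict quasiconvexity hypothesis.
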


\begin{proof}
	\begin{itemize}
		\item[(i)] If there does not exist $m$ satisfying (\ref{alg2}),  then for every positive integer $m$, we have
		\begin{equation}
			f(z^{k,m},x^k) -f(z^{k,m},y^k) < \frac{\alpha}{2\rho_k}\|y^k-x^k\|^2. \label{eq8}
		\end{equation}
		Let $m \rightarrow +\infty$,  we have $z^{k,m} \rightarrow x^k$ and (\ref{eq8}) becomes
		\begin{equation}
		 	-f(x^k,y^k) \leq \frac{\alpha}{2\rho_k} \|y^k-x^k\|^2. \label{eq9}
		\end{equation}
		On the other hand, (\ref{alg1}) means that for all $y\in C$,
		\begin{equation}
			f(x^k,y^k) +\frac{1}{2\rho_k} \|y^k-x^k\|^2 \leq f(x^k,y) +\frac{1}{2\rho_k} \|y-x^k\|^2.\nonumber
		\end{equation}
		By choosing $y=x^k$, we obtain
		\begin{equation}
			f(x^k,y^k) +\frac{1}{2\rho_k} \|y^k-x^k\|^2 \leq 0.\label{eq10}
		\end{equation}
		Combining (\ref{eq9}) with (\ref{eq10}), it follows that $\alpha\geq 1$. This is a contradiction because $\alpha \in \left(0,1\right)$.

		\item[(ii)] From (\ref{alg2}), $f(z^k,x^k)>f(z^k,y^k)$. By the semi-strictly quasiconvexity of $f(z^k,.)$ on $C$, it follows
		\begin{equation*}
			0=f(z^k,z^k)< f(z^k,x^k).
		\end{equation*}
	
		\item[(iii)] It follows from part (ii) that 
			\begin{equation*}
			0=f(z^k,z^k)< f(z^k,x^k).
			\end{equation*}
		By the definition of $\partial^*_2f(z^k,x^k)$, it is clear that $0\not\in \partial^*_2f(z^k,x^k)$.

	\end{itemize}
\end{proof}

\begin{proposition}
	If $x^{k+1}=x^k$ then $z^k$ is a solution of (EP) provided $f(x,.)$ is semi-strictly quasiconvex on $C$ for any $x\in C$.
\end{proposition}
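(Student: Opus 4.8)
The plan is to read a variational inequality off the stopping condition $x^{k+1}=x^k$ through the projection step (\ref{alg4}), and then to feed it into the defining property (\ref{alg3}) of the normal subdifferential together with Proposition~\ref{prop3}.

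First observe that the test $x^{k+1}=x^k$ is only performed in an iteration that has already passed the branch $y^k\neq x^k$, so in particular $y^k\neq x^k$. Hence $z^k=(1-\theta^m)x^k+\theta^m y^k$ lies in $C$ (being a convex combination of $x^k,y^k\in C$), Proposition~\ref{prop3}(ii) gives $f(z^k,x^k)>0$, and Proposition~\ref{prop3}(iii) guarantees that the (normalized) vector $g^k\in\partial^*_2 f(z^k,x^k)$ appearing in (\ref{alg3})--(\ref{alg4}) is well defined and nonzero. From $x^k=x^{k+1}=P_C(x^k-\sigma_k g^k)$ and the obtuse-angle characterization of the metric projection, $\langle x^k-\sigma_k g^k-x^k,\ y-x^k\rangle\le 0$ for all $y\in C$, i.e.\ (using $\sigma_k>0$) $\langle g^k,\ y-x^k\rangle\ge 0$ for all $y\in C$.

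Now combine the two facts. If there were $\bar y\in C$ with $f(z^k,\bar y)<f(z^k,x^k)$, the definition of $\partial^*_2 f(z^k,x^k)$ in (\ref{alg3}) would force $\langle g^k,\bar y-x^k\rangle<0$, contradicting the inequality just obtained from the projection. Thus $f(z^k,y)\ge f(z^k,x^k)$ for every $y\in C$, and since $f(z^k,x^k)>0$ by Proposition~\ref{prop3}(ii) we get $f(z^k,y)\ge f(z^k,x^k)>0$ for all $y\in C$. Because $z^k\in C$, this is exactly the assertion that $z^k$ solves (EP).

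I do not expect a real obstacle: the argument is a short chain of implications. The two points that need care are transcribing $x^k=P_C(x^k-\sigma_k g^k)$ into $\langle g^k,y-x^k\rangle\ge 0$ with the correct sign, and noting that Proposition~\ref{prop3} applies because its hypothesis $y^k\neq x^k$ holds automatically whenever the iteration reaches the final stopping test. (In fact one can say more: taking $y=z^k\in C$ in $f(z^k,y)\ge f(z^k,x^k)$ gives $0=f(z^k,z^k)\ge f(z^k,x^k)>0$, so under semi-strict quasiconvexity this stopping event cannot actually occur and the statement then holds vacuously; the stated conclusion is correct in either reading.)
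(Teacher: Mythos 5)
Your argument is correct and follows essentially the same route as the paper's own proof: read off $\langle g^k, y-x^k\rangle\ge 0$ for all $y\in C$ from $x^k=P_C(x^k-\sigma_k g^k)$, use the definition of $\partial^*_2 f(z^k,x^k)$ to conclude $f(z^k,y)\ge f(z^k,x^k)$ on $C$, and invoke Proposition~\ref{prop3}(ii) for $f(z^k,x^k)>0$. Your closing remark that the case is in fact vacuous (take $y=z^k$ to get $0\ge f(z^k,x^k)>0$) is a valid extra observation not made in the paper, but it does not affect the correctness of either proof.
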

\begin{proof}
	By the algorithm, $x^{k+1}=x^k$  means that $x^k=P_C(x^k-\sigma_k g^k),$ which is equivalent to
	\begin{equation}
		\langle g^k, y-x^k\rangle \geq 0 \quad\forall y\in C.\label{eq11}
	\end{equation}
Remember that, by (\ref{alg3}),
\begin{equation*}
	g^k \in \partial^*_2f(z^k,x^k):=\left\{g\in \R^n: \langle g, y-x^k \rangle<0 \text{ if } f(z^k,y)< f(z^k,x^k)\right\}.
\end{equation*}
Thus, by (\ref{eq11}) ,   $f(z^k,y) \geq f(z^k,x^k)$ for $y\in C$. 

Note that, in part (ii), Proposition \ref{prop3}, we have  proved that if $x^k\not=y^k$,  then $f(z^k,x^k )>0$. So, we can conclude that  $f(z^k,y) \geq f(z^k,x^k) \geq 0$ for every $y\in C$, which means that  $z^k$ is a solution of (EP).

\end{proof}

\begin{proposition} Suppose that the solution-set $S_d$ of the Minty problem is nonempty.
	Let $z^* \in S_d$, then 
\begin{equation}
	\|x^{k+1}-z^*\|^2 \leq \|x^k-z^*\|^2 +\sigma_k^2,
	\label{eq12}
\end{equation}
and
\begin{equation}
	\liminf_{k \rightarrow +\infty} \langle g^k, x^k -z^*\rangle =0.\label{eq13}
\end{equation}\label{prop5}
\end{proposition}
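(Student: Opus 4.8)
The plan is to combine the standard projection/subgradient-step identity with the single structural fact that, because $z^*$ solves the Minty problem, the search direction $g^k$ makes an acute angle with $x^k-z^*$; once that is in hand, (\ref{eq12}) is immediate and (\ref{eq13}) follows by the usual square-summable / non-summable step-size bookkeeping.

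The first and only non-routine step is to establish, for every iteration $k$ at which the algorithm does not terminate (so $y^k\neq x^k$, and $z^k$, $g^k$ are well defined by parts (i) and (iii) of Proposition~\ref{prop3}), the angle inequality
$$\langle g^k,\, x^k-z^*\rangle > 0.$$
Since $z^*\in S_d$ we have $f(z^k,z^*)\le 0$, while part (ii) of Proposition~\ref{prop3} gives $f(z^k,x^k)>0$; hence $f(z^k,z^*)<f(z^k,x^k)$, and substituting $y=z^*$ into the defining inclusion $g^k\in\partial^*_2 f(z^k,x^k)$ from (\ref{alg3}) yields $\langle g^k,z^*-x^k\rangle<0$, as claimed.

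Next, since $z^*\in C$ we have $P_C z^*=z^*$, so by nonexpansiveness of $P_C$ and $\|g^k\|=1$,
$$\|x^{k+1}-z^*\|^2=\|P_C(x^k-\sigma_k g^k)-z^*\|^2\le\|x^k-\sigma_k g^k-z^*\|^2=\|x^k-z^*\|^2-2\sigma_k\langle g^k,x^k-z^*\rangle+\sigma_k^2.$$
Dropping the nonnegative middle term gives (\ref{eq12}). For (\ref{eq13}), rearrange the same chain to $2\sigma_k\langle g^k,x^k-z^*\rangle\le\|x^k-z^*\|^2-\|x^{k+1}-z^*\|^2+\sigma_k^2$ and sum over $k=0,\dots,N$: the first two terms telescope and $\sum_k\sigma_k^2<\infty$, so $\sum_{k=0}^{\infty}\sigma_k\langle g^k,x^k-z^*\rangle<\infty$. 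Since every summand is strictly positive and $\sum_k\sigma_k=\infty$, the assumption $\liminf_k\langle g^k,x^k-z^*\rangle=:c>0$ would force $\sum_{k\ge K}\sigma_k\langle g^k,x^k-z^*\rangle\ge\tfrac{c}{2}\sum_{k\ge K}\sigma_k=\infty$ for some $K$, a contradiction; combined with positivity of each term this yields $\liminf_k\langle g^k,x^k-z^*\rangle=0$.

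I do not expect a genuine obstacle. The two points requiring a little care are the implicit standing assumption that the algorithm generates an infinite sequence (if $y^k=x^k$ for some $k$ the algorithm stops at a solution or a stationary point and there is nothing to prove) and the fact that the whole argument rests on the \emph{Minty} solution set $S_d$ rather than $S$ — it is precisely the nonemptiness of $S_d$, tied back to that of $S$ via Lemma~\ref{lem1}, that makes the angle inequality available. Everything else is the standard bookkeeping for subgradient-type methods with square-summable, non-summable step sizes.
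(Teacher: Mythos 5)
Your proof is correct and follows essentially the same route as the paper: the key angle inequality $\langle g^k, z^*-x^k\rangle<0$ from $f(z^k,z^*)\le 0< f(z^k,x^k)$ and the definition of $\partial^*_2 f(z^k,x^k)$, combined with nonexpansiveness of $P_C$ and $\|g^k\|=1$. In fact the paper's written proof stops after establishing (\ref{eq12}); your telescoping/square-summable argument supplies the (standard, omitted) justification of (\ref{eq13}) correctly.
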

\begin{proof}
	For $y\in C$, we have
	\begin{eqnarray}
		\|x^{k+1} -y\|^2&=& \|P_C(x^k -\sigma_k g^k)-y\|^2\nonumber\\
		&\leq& \|x^k -\sigma_k g^k -y\|^2\nonumber\\
		&\leq& \|x^k -y\|^2 +\sigma_k^2 + 2\sigma_k \langle g^k, y- x^k \rangle\nonumber.
	\end{eqnarray}
 With $y=z^*\in S_d$, we  have
\begin{equation}
	\|x^{k+1} -z^*\|^2 \leq \|x^k -z^*\|^2 +\sigma_k^2 + 2\sigma_k \langle g^k, z^*- x^k \rangle\label{eq14}.
\end{equation}
Since $f(z^k,z^*)\leq 0< f(z^k,x^k)$ and $g^k \in \partial^*_2f(z^k,x^k)$, it follows that
\begin{equation*}
	\langle g^k, z^*-x^k \rangle <0.
\end{equation*}
Therefore, 
\begin{equation*}
	\|x^{k+1}-z^*\|^2 < \|x^k-z^*\|^2 +\sigma_k^2.
\end{equation*}
\end{proof}
 
\vskip1cm
Following \cite{Go2018} we say that a point $x \in C$ is $\rho$- quasi-solution (prox-solution) to Problem (EP)  if $f(x,y)+\frac{1}{2\rho}\|y- x\|^2 \geq 0$ for every $y\in C$.

For the convergence of the proposed algorithm we need the following assymptions.
\begin{itemize}
\item[(A0)] $f$ is continuous jointly in both variables on an open set containing $C\times C$;
\item[(A1)] $f(x,.)$ is semistrictly quasiconvex   on $C$ forevery $x\in    C$;
\item[(A2)] the solution-set $S_d$ of the Minty problem is nonempty;
	\item[(A3)] The sequence $\{y^k\}$ is bounded.
\end{itemize}

\begin{theorem}
	Suppose that the algorithm does not terminate. Let $\{x^k\}$ be the infinite  sequence generated the algorithm. Under the assumptions  (A0),(A1),(A2),(A3), there exists a subsequence of $\{x^k\}$  converging to a $\overline{\rho}$- quasi solution $\overline{x}$.
	 If   in addition, $f(x,.)$ is strongly quasi-convex for every $x\in C$,  then $\{x^k\}$ converges to the unique solution of (EP).

\end{theorem}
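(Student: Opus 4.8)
The plan is to prove the two assertions in turn, both built on Proposition~\ref{prop5}. Fix $z^*\in S_d$, which is nonempty by (A2). From (\ref{eq12}) and $\sum_k\sigma_k^2<\infty$ the sequence $\{x^k\}$ is bounded; summing the inequality (\ref{eq14}) over $k$ — each term $\langle g^k,x^k-z^*\rangle$ being positive, as in the proof of Proposition~\ref{prop5} — and using $\sum_k\sigma_k=\infty$ yields (\ref{eq13}), i.e. $\liminf_k\langle g^k,x^k-z^*\rangle=0$. By (A3) the sequences $\{y^k\},\{z^k\}$ are bounded and $\|g^k\|=1$, so I would pass to a subsequence (still written with index $k$) along which $\langle g^k,x^k-z^*\rangle\to0$ and $x^k\to\overline x$, $y^k\to\overline y$, $z^k\to\overline z$, $g^k\to\overline g$ with $\|\overline g\|=1$, $\rho_k\to\overline\rho$, $\theta^{m_k}\to\overline t\in[0,\theta]$. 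Then $\overline z=(1-\overline t)\overline x+\overline t\,\overline y$ and $\langle\overline g,z^*-\overline x\rangle=0$.

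To prove $\overline x$ is a $\overline\rho$-quasi-solution I would argue by contradiction: if not, there is $\widehat y\in C$ with $f(\overline x,\widehat y)+\tfrac1{2\overline\rho}\|\widehat y-\overline x\|^2<0$; inserting $y=\widehat y$ in the optimality of $y^k$ from (\ref{alg1}), letting $k\to\infty$ and using $\rho_k\ge\overline\rho$ and (A0) gives $f(\overline x,\overline y)+\tfrac1{2\overline\rho}\|\overline y-\overline x\|^2<0$, hence $\overline y\neq\overline x$. If $\overline t=0$ then $m_k\to\infty$, so for large $k$ the index $m_k-1$ violates (\ref{alg2}); since $z^{k,m_k-1}\to\overline x$, the limit of that violated inequality reads $-f(\overline x,\overline y)\le\tfrac\alpha{2\overline\rho}\|\overline y-\overline x\|^2$, which with the limit of (\ref{eq10}), namely $-f(\overline x,\overline y)\ge\tfrac1{2\overline\rho}\|\overline y-\overline x\|^2$, forces $(1-\alpha)\|\overline y-\overline x\|^2\le0$, impossible since $\alpha\in(0,1)$ and $\overline y\neq\overline x$. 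If $\overline t\in(0,\theta]$ then $\overline z$ is strictly between $\overline x$ and $\overline y$; the limit of (\ref{alg2}) gives $f(\overline z,\overline x)>f(\overline z,\overline y)$, and semistrict quasiconvexity of $f(\overline z,\cdot)$ (A1) along the segment $[\overline x,\overline y]$ then gives $f(\overline z,\overline x)>f(\overline z,\overline z)=0$. Since $z^*\in S_d$ we have $f(\overline z,z^*)\le0<f(\overline z,\overline x)$, so by (A0) there is $r>0$ with $f(\overline z,y)<f(\overline z,\overline x)$ for all $y\in B(z^*,r)$; for each such fixed $y$, continuity makes $f(z^k,y)<f(z^k,x^k)$ hold eventually, so the defining inequality of $g^k\in\partial^*_2 f(z^k,x^k)$ passes to the limit as $\langle\overline g,y-\overline x\rangle\le0$ on $B(z^*,r)$, and the choice $y=z^*+\tfrac r2\overline g$ gives $0=\langle\overline g,z^*-\overline x\rangle\le-\tfrac r2\|\overline g\|^2=-\tfrac r2<0$, a contradiction. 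Hence $\overline x$ is a $\overline\rho$-quasi-solution.

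For the second statement assume in addition that $f(x,\cdot)$ is strongly quasiconvex with some modulus $\gamma>0$ on $C$ for every $x$. Running the computation from the proof of Proposition~\ref{prop2}(ii) with the $\overline\rho$-quasi-solution $\overline x$ in place of $x^k$ — plug $y=\lambda x+(1-\lambda)\overline x$ and let $\lambda\downarrow0$ — shows $\overline x\in S$. For uniqueness, recall $z^*\in S_d\subseteq S$ (Lemma~\ref{M1}, since strong $\Rightarrow$ semistrict quasiconvexity): if $x'\in S$ then $f(x',z^*)\ge0$ while $z^*\in S_d$ gives $f(x',z^*)\le0$, so $f(x',z^*)=0=f(x',x')$; strong quasiconvexity of $f(x',\cdot)$ at the midpoint of $x'$ and $z^*$ yields $f\!\big(x',\tfrac12(x'+z^*)\big)\le-\tfrac\gamma8\|x'-z^*\|^2$, which must be $\ge0$ because $x'\in S$, so $x'=z^*$. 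Thus $S=S_d=\{z^*\}$ and $\overline x=z^*$. Finally, (\ref{eq12}) with $\sum_k\sigma_k^2<\infty$ makes $\{\|x^k-z^*\|^2\}$ a quasi-Fej\'er sequence, so $\lim_k\|x^k-z^*\|$ exists; being $0$ along the subsequence of the first part, it is $0$, i.e. $x^k\to z^*$, the unique solution of (EP).

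The step I expect to be most delicate is the case $\overline t\in(0,\theta]$: one must pass to the limit inside the normal-subgradient inclusion, where strictness is lost, and then recover a contradiction from the fact that $z^*$ lies in the \emph{open} strict sublevel set of $f(\overline z,\cdot)$ at level $f(\overline z,\overline x)$ while $\langle\overline g,z^*-\overline x\rangle$ has already been forced to $0$, so a normalized $\overline g$ would have to support that sublevel set at an interior point. Both (A0) (to keep strict inequalities usable in the limit and to locate the ball $B(z^*,r)$) and (A1) (to obtain $f(\overline z,\overline x)>0$) enter precisely here; a minor recurring technicality is that $\rho_k$ only satisfies $\overline\rho\le\rho_k\le\rho_0$, which must be carried through every limiting estimate.
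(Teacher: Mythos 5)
Your proof is correct, but it is organized quite differently from the paper's. The paper argues directly: it first proves $f(\overline z,\overline x)=0$ by playing the inclusion (\ref{alg3}) at the points $z'=z^*+\epsilon_2 g^{k_j}$ against $\liminf_k\langle g^k,x^k-z^*\rangle=0$, then uses semistrict quasiconvexity to get $f(\overline z,\overline y)=0$, deduces $\|x^{k_j}-y^{k_j}\|\to 0$ from (\ref{alg2}), and passes to the limit in (\ref{alg1}) to obtain the quasi-solution inequality (\ref{M2}); the strongly quasiconvex case is then handled exactly as in the first step of your last paragraph. You instead argue by contradiction and split on the limit $\overline t$ of the linesearch stepsizes $\theta^{m_k}$: when $\overline t=0$ you exploit the minimality of $m_k$ (the failed inequality at $m_k-1$) together with the limit of (\ref{eq10}) to force $(1-\alpha)\|\overline y-\overline x\|^2\le 0$, and when $\overline t>0$ you run the normal-subgradient/continuity argument with the limit vector $\overline g$ and the test point $y=z^*+\tfrac r2\overline g$, a legitimate variant of the paper's two-ball construction (both evaluate $f(z,\cdot)$ and (\ref{alg3}) at points possibly outside $C$, which is fine since $f$ is defined on $\R^n\times\R^n$ and (A0) gives continuity on an open set containing $C\times C$). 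This case split buys you something real: the paper's Step 2 asserts $0<\theta\le\theta_j$ and hence $\overline z\ne\overline x$, which is not justified as written (one only has $\theta_j=\theta^{m_j}\le\theta$, and $\theta_j\to 0$ is possible), whereas your $\overline t=0$ branch covers precisely that situation, so your argument is tighter at the one delicate point of the paper's proof. Your treatment of the strongly quasiconvex case is also more complete: you prove uniqueness ($S=S_d=\{z^*\}$ via the midpoint estimate from strong quasiconvexity) and then whole-sequence convergence from the quasi-Fej\'er property of $\{\|x^k-z^*\|\}$, steps which the paper's proof leaves implicit after establishing $f(\overline x,x)\ge 0$ for all $x\in C$.
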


\begin{proof}
	Let $z^* \in S_d$. By part (i) Proposition \ref{prop5} and $\sum_{k=1}^{+\infty} \sigma^2_k<+\infty$, the sequence $\{\|x^k-z^*\|^2\}$ is convergent. 
	Hence, $\{x^k\}$ is bounded. 
	
	Let $\{x^{k_j}\}$ be a subsequence of $\{x^k\}$ such that $x^{k_j}$ converges to some  point $\overline{x}$ and
	\begin{equation}
		\lim_{j \rightarrow +\infty} \langle g^{k_j}, x^{k_j}-z^*\rangle=\liminf_{k \rightarrow +\infty} \langle g^k, x^k -z^*\rangle =0.\label{eq17}
	\end{equation}
	
	Since the sequence $\{y^{k_j}\}_j$ is bounded.   $\{z^{k_j}\}_j$ is  bounded too.  By taking subsequences if necessary, without loss of generality, we can assume that ${y^{k_j}}$ converges to $\overline{y}$ and ${z^{k_j}}$ converges to $\overline{z}$.
	\vskip0.5cm
	\textbf{Step 1:} We will prove that 
	\begin{equation}
		f(\overline{z}, \overline{x})= 0.
	\end{equation}

Indeed, from part (ii) Proposition \ref{prop3}, $f(z^k,x^k)>0$. In addition, by Assumption (A0), $f(.,.)$ is continuous on $C\times C$, we have
$$f(\overline{z}, \overline{x})= \lim_{j \rightarrow +\infty} f(z^{k_j},x^{k_j})\geq 0.$$

Now, assume that  $f(\overline{z}, \overline{x})=\epsilon>0$. Then there exists $j_0$ such that $ f(z^{k_j},x^{k_j})>\frac{\epsilon}{2}$ for all $j\geq j_0$.

Since $z^*\in S_d$, we have $f(\overline{z}, z^*)\leq 0$. Again by (A0), there exists $\epsilon_1, \epsilon_2 >0$ such that for all $z\in B(\overline{z}, \epsilon_1)$, $z'\in B(z^*,\epsilon_2)$:
$$f(z,z') <\frac{\epsilon}{2}.$$
 Since  $\{z^{k_j}\}$ converges to $\overline{z}$,   there exists $j_1$ such that for any $j\geq j_1$  we have $z^{k_j} \in 
B(\overline{z}, \epsilon_1)$, from which it follows that  for $j \geq \max(j_0,j_1)$, and $z'\in B(z^*,\epsilon_2)$, we have
$$f(z^{k_j}, z') < f(z^{k_j},x^{k_j}).$$

By taking $z'=z^*+\epsilon_2 g^{k_j}$ and thanks to (\ref{alg3}), we have for $j \geq \max(j_0,j_1)$,
$$\langle g^{k_j}, x^{k_j} -z^* \rangle >\epsilon_2,$$
which contracts to (\ref{eq17}). Thus $	f(\overline{z}, \overline{x})=0$.

\vskip0.5cm\textbf{Step 2:} We  prove that $$\lim_{j\rightarrow +\infty}\|x^{k_j} -y^{k_j}\|=0.$$
From (\ref{alg2}), we know that for any $k$
$$f(z^k,x^k) >f(z^k,y^k).$$
So, $f(\overline{z}, \overline{y})\leq f(\overline{z}, \overline{x})=0$ (by Step 1). 

Let $\theta_j=\theta^{m_j}$ such that $z^{k_j}= (1-\theta_j)x^{k_j}+\theta_j y^{k_j}$. Clearly, $0<\theta\leq \theta_j<1$. Therefore, $\overline{z}$ is a convex combination of $\overline{x}$ and $\overline{y}$ and $\overline{z}\not= \overline{x}$. 

Now if  $f(\overline{z}, \overline{y})<0$, then $\overline{z}\not= \overline{y}$.  By the semi-strictly quasiconvexity of $f(\overline{z},.)$, we have
$$0=f(\overline{z}, \overline{z})< f(\overline{z}, \overline{x}),$$
which is impossible. It implies that
$$f(\overline{z}, \overline{y})=0.$$

	From (\ref{alg2}),
	\begin{equation}
		f(z^{k_j},x^{k_j}) -f(z^{k_j},y^{k_j}) \geq \alpha\rho_{k_j} \|y^{k_j}-x^{k_j}\|^2.
	\end{equation}
	Let $j \rightarrow +\infty$, and note that $\lim_{k \rightarrow +\infty} \rho_k=\overline{\rho}>0$, we obtain
	$$\lim_{j\rightarrow +\infty}\|x^{k_j} -y^{k_j}\|=0.$$ This means $\overline{x}=\overline{y}$.
	Note that \begin{equation}
		y^{k_j} \in \text{argmin}_{y\in C} \left\{ f(x^{k_j},y) +\frac{1}{2\rho_{k_j}} \|x^{k_j}-y\|^2\right\}.
		\nonumber
	\end{equation}
	then for any $y\in C$, we have
	$$f(x^{k_j},y^{k_j}) +\frac{1}{2\rho_{k_j} }\|x^{k_j}-y^{k_j}\|^2\leq f(x^{k_j},y) +
\frac{1}{2\rho_{k_j}} \|x^{k_j}-y\|^2.$$
	Let $j \rightarrow +\infty$, by the continuity of $f$ and $\overline{x}=\overline{y}$, we obtain for any $y\in C$,
	\begin{equation}\label{M2}
0\leq f(\overline{x},y) +\frac{1}{2\overline{\rho} }\|\overline{x}-y\|^2.
\end{equation}
	Assume, in addition, that $f(x,.)$ is strongly quasiconvex on $C$ with modulus $\gamma >0$.
For any $x\in C$ and $\lambda \in [0,1]$, take $y=\lambda x + (1-\lambda) \overline{x}$. By eplacing it to (\ref{M2}) we obtain
$$0\leq f(\overline{x}, \lambda x+(1-\lambda) \overline{x}) +\frac{1}{2\overline{\rho} }\|\overline{x}-(\lambda x+(1-\lambda) \overline{x}\|^2.$$
Then using the definition of strong quasiconvexity, by the same argument as in the proof of part (ii) in Proposition 3.1,
 we can see that $f(\overline{x} , x) \geq 0$ for every $x\in C$.

\end{proof}
Let 
$$ S_{\overline{\rho}}:=f(\overline{x},y) +\frac{1}{2\overline{\rho} }\|\overline{x}-y\|^2.$$

\begin{remark} (i) In virtue of Lemma \ref{M1} we have  $S_d\subseteq S$. Thus, if $S_{\overline{\rho}}=S_d$, then     $  \overline{x} \in S_d =S$. Remember that  the sequence $\{\|x^k -\overline{x}\|^2\}$ is convergent we can conclude  that the whole sequence $\{x^k\}$ converges to $\overline{x}$ which is a solution of (EP).

(ii)  Since  for any $\overline{\rho}$, one can choose a sequence $\{\rho_k\}$ such that $\rho_k \to \overline{\rho}$. Thus, from  $  f(\overline{x},y) +\overline{\rho} \|\overline{x}-y\|^2 \geq 0 \  \forall y\in C$, it can be seen that  for any $\epsilon >0$, there exists $ \overline{\rho} > 0$ small enough such that
	$  f(\overline{x},y) \geq -\epsilon $ provided  $C$ is bounded. So	 
	 one can considered $ \overline{x}$ as approximate solution.  
 
In the case $f$ is pseudomonotone,  then by Lemma \ref{lem1}   $S=S_d$. Clearly,  $S\subseteq S_{\rho}$ for every $\rho > 0$.   In addition, if $f(x,.)$ is pseudoconvex and continuously differentiable      for any $x\in C$ , then $S = S_{\rho}$ for every $\rho >0$. Hence 
$\overline{x}$ is a solution.

 (iii) Clearly, Assumption (A3) may be dropped if $C$ is bounded (often in practice), moreover, from the proof one can see that this assumption is not needed  if the optimization problem (\ref{alg1}) admits a unique solution for every $k$.

	\end{remark} 
 The following simple example shows that a $\rho$- quasi-solution with any $\rho > 0$ may not be a solution.
 	
Let $C  := [-1, 0]$,  $f(x,y) := y^3 -x^3$, Clearly, with $x^* =0$, we have $f(x^*,y) + \frac{1}{2\rho}(y-x^*)^2 = y^3  +\frac{1}{2\rho}y^2 \geq 0, \ \forall y\in C$  if $\rho>0$ small enough, for example $\rho< 1/2$. Thus, 
$0$ is $\rho$- prox-solution, but $f(x^*,y)  = y^3 < 0$ with $y=-1\in C$. So the auxiliary problem principle fails to appy to semi-strictly quasiconvex equilibrium problems,

 Now we consider an  example \cite{KT1998} in which the  optimization problem (\ref{alg1})  can be solved efficiently. 
 
 Suppose that  bifunction $f(x,y):= \max_{i\in I}g_i(x,y)$, where $I \subset \R$ is compact, and each $g_i(x,.)$ is quasiconvex on $C$ for every fixed $x\in C$. Then $f(x,.)$ is quasiconvex.
Suppose that   each $g_i(x,.)$ ($i\in I)$  is differentiable and  its derivative  is Lipschitz with constant $L_i(x)>0.$
  Let $\rho <\frac{1}{L(x)} $ with $L(x):= \max_{i\in I}L_i(x)$ and 
  $$f_\rho(x,y) := f(x,y) +\frac{1}{2\rho}\|y-x\|^2.$$
  Then $f_\rho(x,.)$ is strongly convex on $C$. Indeed, for any $u,v\in C$ and $i\in I$, consider the function 
 $g_{i,\rho}(x,y):= g_i(x,y) +\frac{1}{2\rho}\|y-x\|^2$. Then we have
  $$\langle \nabla_2 g_{i, \rho}(x,u) -\nabla_2 g_{i,\rho}(x,v),u-v\rangle$$
  $$= \langle \nabla_2 g_i(x,u) +\frac{1}{\rho}  (u-x) -\nabla_2 g_i(x,v) - \frac{1}{\rho} (v-x),u-v\rangle $$
  $$\geq -L(x)\|u-v\|^2 +\frac{1}{\rho}\|u-v\|^2 = (\frac{1}{\rho}- L(x))\|u-v\|^2.$$
  Hence $f_\rho(x,.)$ is strongly convex whenever $\rho <\frac{1}{L(x)}$. The above example belongs to the class of the lower -$C^2$ functions considered  by some some authors see e.g.
\cite{Mi1977,RW1998,Vi1983}.

\section{Numerical experiments}
We present here two examples two illustrate the behavior of our linesearch extragradient algorithm (for short LEQEP) for quasiconvex equilibrium problems. The algorithm is implemeted in Python 3 running on a Laptop with AMD Ryzen 7 5800H with Radeon Graphics 3.20 GHz and 8GB RAM memory.
\begin{example}\label{ex1}
We consider the following $1$-dim strongly quasiconvex equilibrium problems (\cite[Example 4.2]{IL2021})
$$C=\left[0, \delta\right];$$
$$f(x,y)=\sqrt{y}-\sqrt{x} +r x(y-x);$$
where $\alpha, \delta >0$. It is easy to see that the solution set is $S=\{0\}.$

We test LEQEP on this example with $r= 2$ and $\delta=10$. We take $x^0=5$, $\rho=1$, $\alpha=0.5$, $\theta=0.5$ and stop the algorithm if $|x^k -y^k| <10^{-3}$ or $|x^{k+1}-x^{k}|<10^{-3}$. Our algorithm  reach the unique solution $x^*=0$ after $84$ iterations if we choose $\sigma_k=\frac{1}{k+1}$ and $8$ iterations if we choose $\sigma_k=\frac{2}{k+1}$. Figure \ref{fig1} illustrates the behavior of LEQEP for this example.

\begin{figure}[h!]
     \centering
     \begin{subfigure}[b]{0.45\textwidth}
         \centering
         \includegraphics[width=\textwidth]{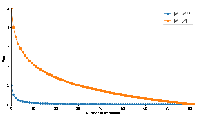}
         \caption{$\sigma_k=\frac{1}{k+1}$}
     \end{subfigure}
     \hfill
     \begin{subfigure}[b]{0.45\textwidth}
         \centering
         \includegraphics[width=\textwidth]{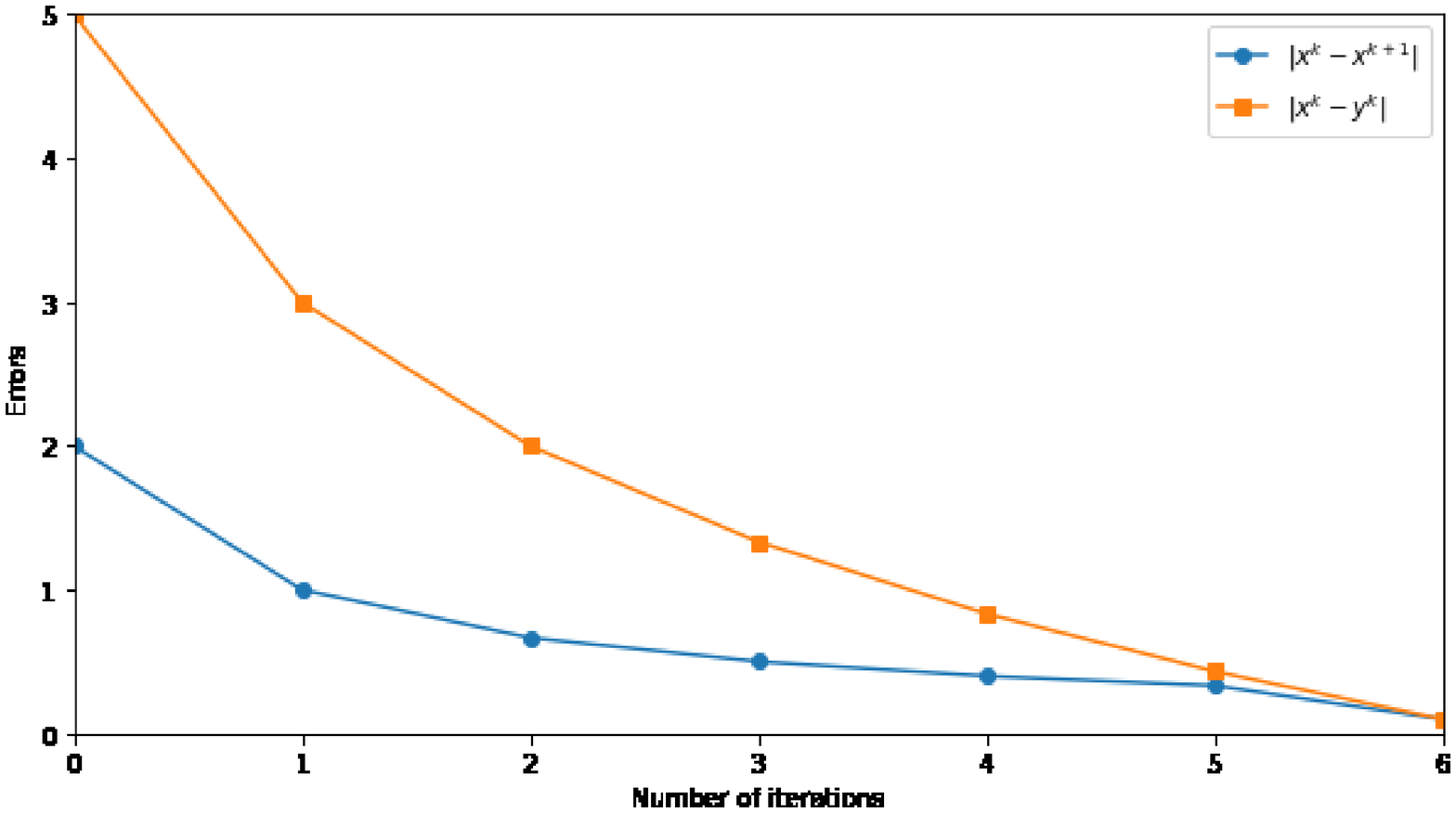}
         \caption{$\sigma_k=\frac{2}{k+1}$}
     \end{subfigure}
     
        \caption{Behavior of LEQEP in Example \ref{ex1}}
        \label{fig1}
\end{figure}
\end{example}

\begin{example} \label{ex2}
We consider the bifunction
$$f(x,y)=\max\{f_1(x,y),f_2(x,y)\},$$
where
$$f_1(x,y)=\langle A_1x+b_1, \frac{E_1y+f_1}{c_1^Ty+d_1}- \frac{E_1x+f_1}{c_1^Tx+d_1}\rangle,$$
$$f_2(x,y)=\langle A_2x+b_2, \frac{E_2y+f_2}{c_2^Ty+d_2}- \frac{E_2x+f_2}{c_2^Tx+d_2}\rangle,$$
with $A_1,A_2,E_1,E_2 \in \mathbb{R}^{m\times n}$, $b_1,b_2,f_1,f_2\in \mathbb{R}^m$, $c_1,c_2\in \mathbb{R}^n$ and $d_1,d_2 \in \mathbb{R}$. We also assume that 
$$C{\color{blue}\subset}\{x: c_1^Tx+d_1>0\}\cap \{x: c_2^Tx+d_2>0\}.$$

In the first experiment, we take $m=n=2$ and
$$A_1=A_2=I_2, \quad b_1=b_2 =0_2,$$
$$E_1=I_2, \quad f_1=1_2,\quad c_1=0_2,\quad d_1=1,$$
$$E_2=\begin{bmatrix}
1&2\\3&4
\end{bmatrix}, \quad f_2=c_2=1_2, \quad d_2=2,$$
$$C=\left[0,5\right]^2.$$

We test LEQEP on this example with  $x^0=\begin{bmatrix}
5&5
\end{bmatrix}^T$, $\rho=1$, $\alpha=0.5$, $\theta=0.8$ and stop the algorithm if $|x^k -y^k| <10^{-5}$ or $|x^{k+1}-x^{k}|<10^{-5}$. Our algorithm  reach an approximate solution of $x^*=\begin{bmatrix}0& 0 \end{bmatrix}^T$ after $129$ iterations if we take $\sigma_k=\frac{1}{k+1}$ and after $13$ iterations if we take $\sigma_k=\frac{3}{2(k+1)}$. Figure \ref{fig2} illustrates the behavior of LEQEP in this example.
\begin{figure}[h!]
     \centering
     \begin{subfigure}[b]{0.45\textwidth}
         \centering
         \includegraphics[width=\textwidth]{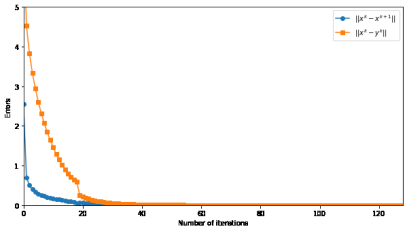}
         \caption{$\sigma_k=\frac{1}{k+1}$}
     \end{subfigure}
     \hfill
     \begin{subfigure}[b]{0.45\textwidth}
         \centering
         \includegraphics[width=\textwidth]{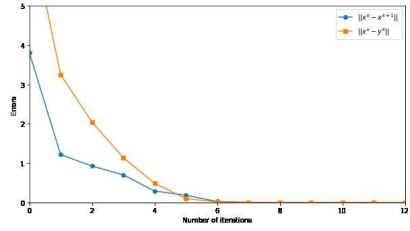}
         \caption{$\sigma_k=\frac{3}{2(k+1)}$}
     \end{subfigure}
\caption{Behavior of LEQEP in Example \ref{ex2} for $m=n=2$}
\label{fig2}
\end{figure}
In the second experiment, we take $m=n=10$ and
$$A_1=A_2=I_{10}, \quad b_1=b_2 =0_{10},$$
$$E_1=I_{10}, \quad f_1=1_{10},\quad c_1=0_{10},\quad d_1=1,$$
$$E_2=\begin{bmatrix}
4 &4 &2& 4& 2& 2& 3& 2& 1& 2\\
1& 3& 0& 3& 0& 1& 1& 0& 2& 2\\
2& 3& 0 & 2& 2& 2& 1& 1& 2& 2\\
3& 2& 3& 0& 1& 1& 2& 4& 1& 1\\
1& 4& 3& 0& 2& 1& 4& 3& 0& 3\\
0& 4& 1& 4& 2& 3& 4& 3& 4& 2\\
3& 0& 4& 4& 0& 4& 1& 1& 1& 2\\
1& 2& 2& 3& 1& 0& 3& 0& 0& 0\\
2& 0& 0& 3& 0& 3& 3& 4& 4& 2\\
0& 2& 4& 4& 0& 4& 3& 0& 3& 1
\end{bmatrix}, \quad f_2=c_2=1_{10}, \quad d_2=2,$$
$$C=\left[0,5\right]^2.$$

We test LEQEP on this example with  $x^0=5*1_{10}$, $\rho=1$, $\alpha=0.5$, $\theta=0.8$ and stop the algorithm if $|x^k -y^k| <10^{-5}$ or $|x^{k+1}-x^{k}|<10^{-5}$. Figure \ref{fig3} illustrates the behavior of LEQEP in this example.
\begin{figure}[h!]
     \centering
     \begin{subfigure}[b]{0.45\textwidth}
         \centering
         \includegraphics[width=\textwidth]{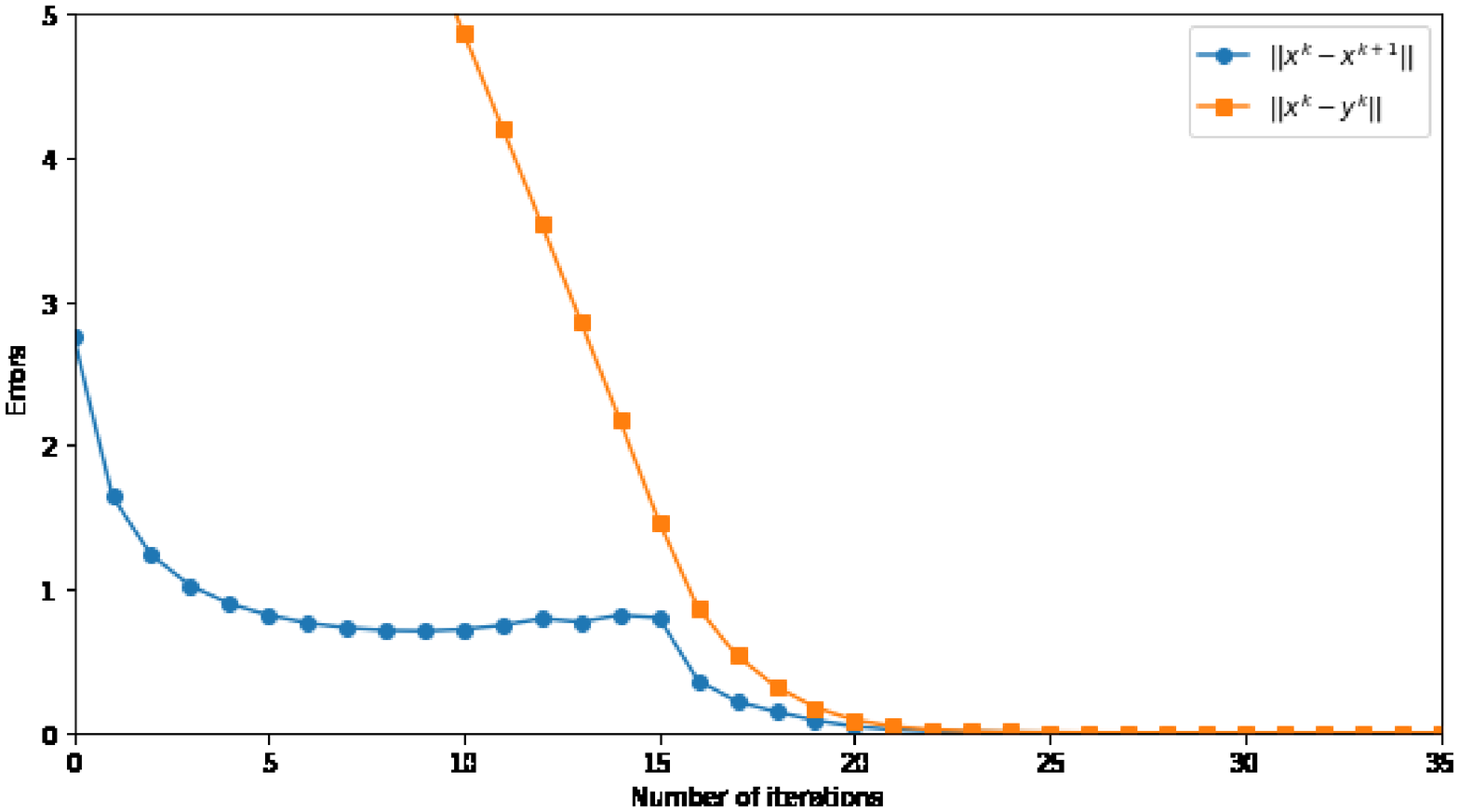}
         \caption{$\sigma_k=\frac{1}{k+1}$}
     \end{subfigure}
     \hfill
     \begin{subfigure}[b]{0.45\textwidth}
         \centering
         \includegraphics[width=\textwidth]{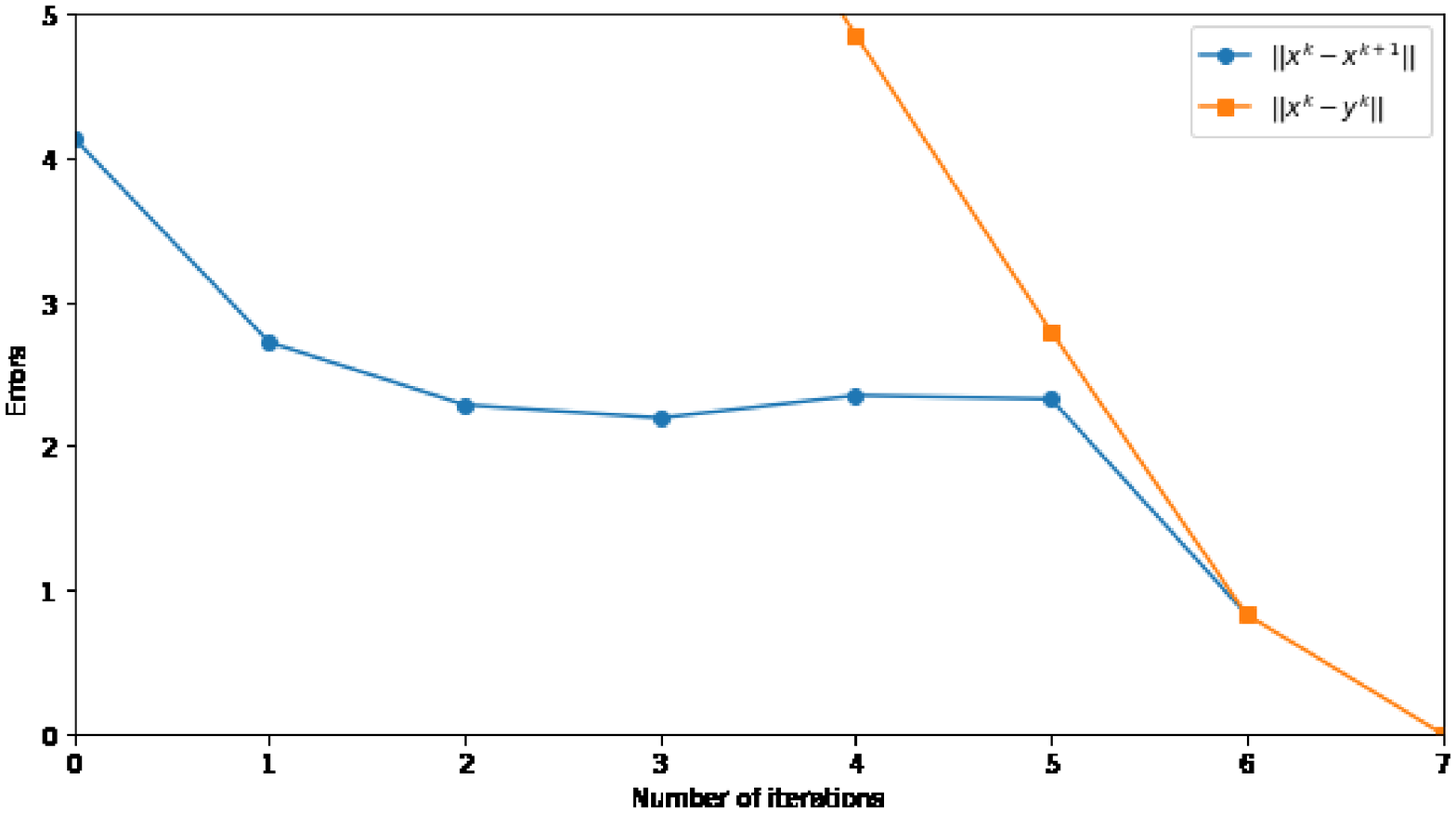}
         \caption{$\sigma_k=\frac{3}{2(k+1)}$}
     \end{subfigure}
\caption{Behavior of LEQEP in Example \ref{ex2} for $m=n=10$}
\label{fig3}
\end{figure}

In the last experiment, each entry of the matrices $A_1,A_2,E_1,E_2$, vectors $b_1,b_2$, $c_1,c_2$, $f_1,f_2$ and number $d_1,d_2$ is randomly generated in the interval $\left[0,5\right]$. We test LEQEP for $m=n= 5,10,20,50$,  $x^0=5*1_{n}$, $\rho=1$, $\alpha=0.5$, $\theta=0.8$  and stop the algorithm if $|x^k -y^k| <10^{-8}$ or $|x^{k+1}-x^{k}|<10^{-8}$ or the number of iterations exceed $1000$. The average time and average error $\min (\|x^k-y^k\|, \|x^k -x^{k+1}\|)$ for each size are reported in Tables \ref{tab1} with different sizes, a hundred of problems have been tested for each size.
\begin{table}[ht]
\caption{Algorithm with $\alpha_k=\frac{n}{k+1}$} 
\centering 
\begin{tabular}{ c c l l } 
\hline\noalign{\smallskip}
 n &N. of prob.   & CPU-times(s)& Error\\
\noalign{\smallskip}\hline\noalign{\smallskip}
5&100&0.0018502084016799928&3.0083411036103923e-05 \\
10&100&0.002031816816329956&6.697999417359917e-06  \\
20&100&0.0020687472820281982&4.7459739660895515e-06 \\
50&100&0.011073581838607788&0.004554847325973183 \\
\hline\noalign{\smallskip}
\end{tabular}
\label{tab1} 
\end{table}
We also record the average errors of $\\|x^k-y^k\|$ and $\|x^k -x^{k+1}\|$ in the first $1000$ iterations in Figure \ref{fig4}. 
\begin{figure}
 	\centering
     \begin{subfigure}[b]{0.45\textwidth}
         \centering
         \includegraphics[width=\textwidth]{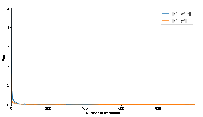}
         \caption{$m=n=5$}
     \end{subfigure}
     \hfill
     \centering
     \begin{subfigure}[b]{0.45\textwidth}
         \centering
         \includegraphics[width=\textwidth]{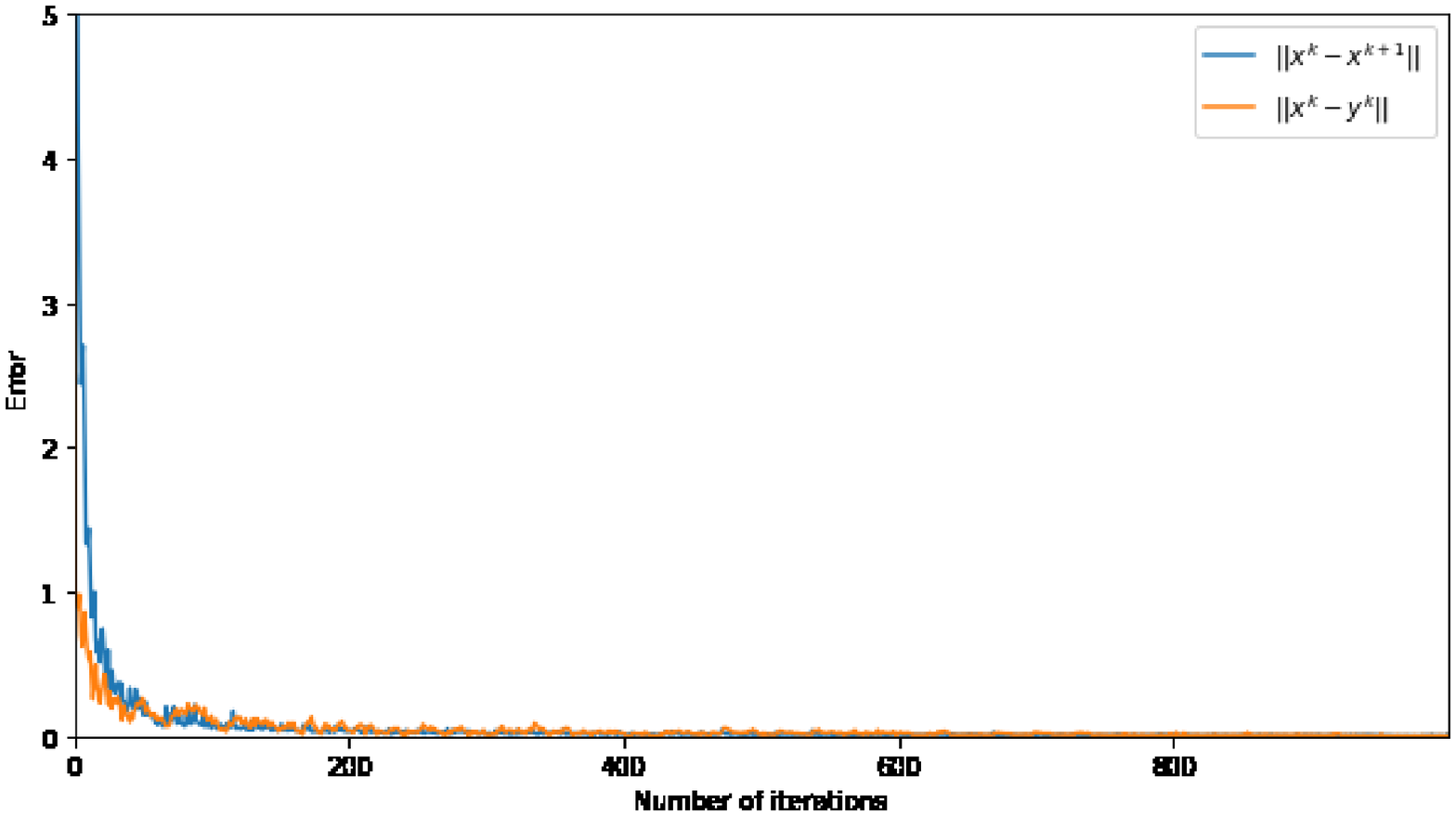}
         \caption{$m=n=10$}
     \end{subfigure}
     \hfill
     \begin{subfigure}[b]{0.45\textwidth}
         \centering
         \includegraphics[width=\textwidth]{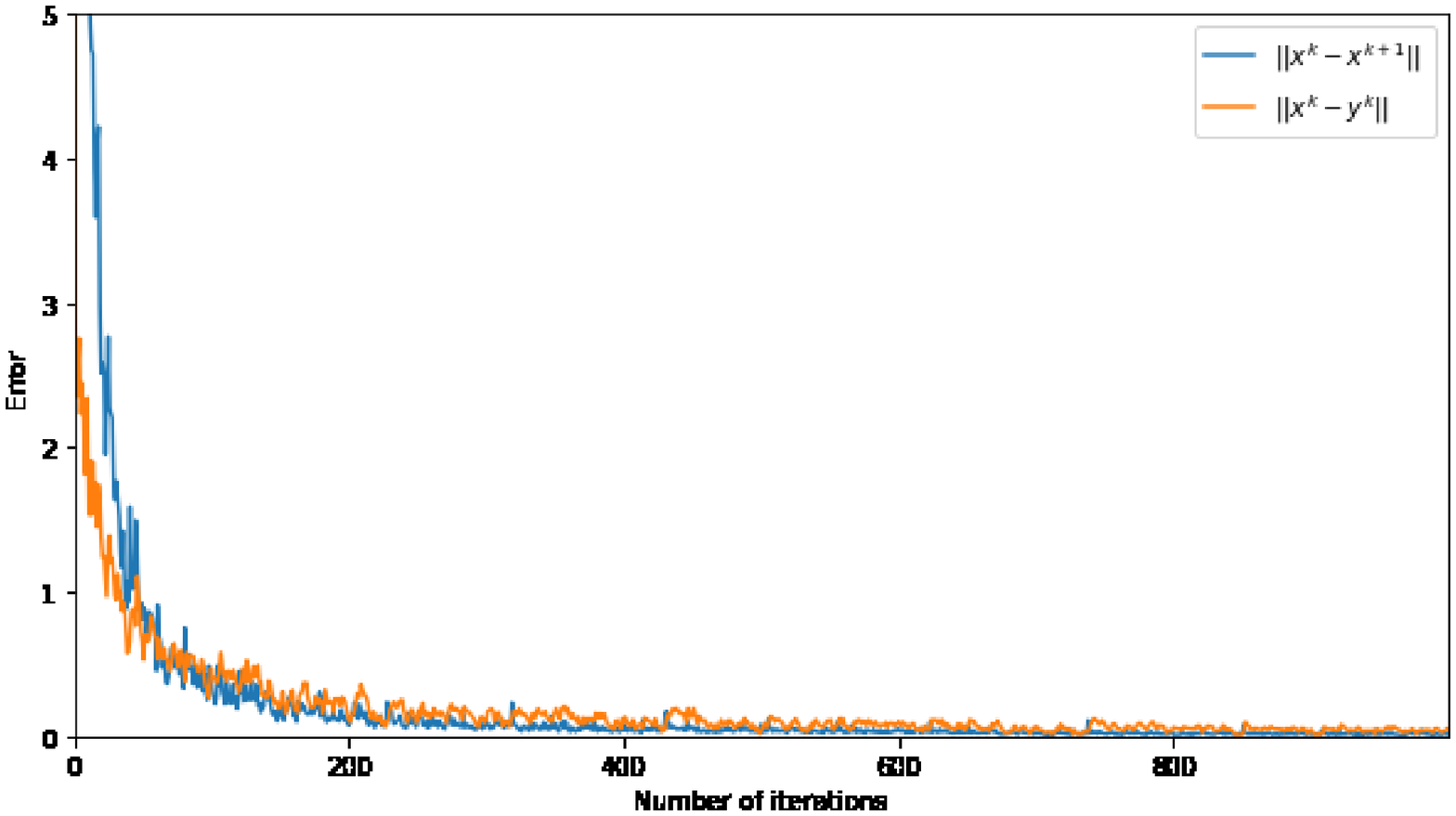}
         \caption{$m=n=20$}
     \end{subfigure}
     \hfill
     \begin{subfigure}[b]{0.45\textwidth}
         \centering
         \includegraphics[width=\textwidth]{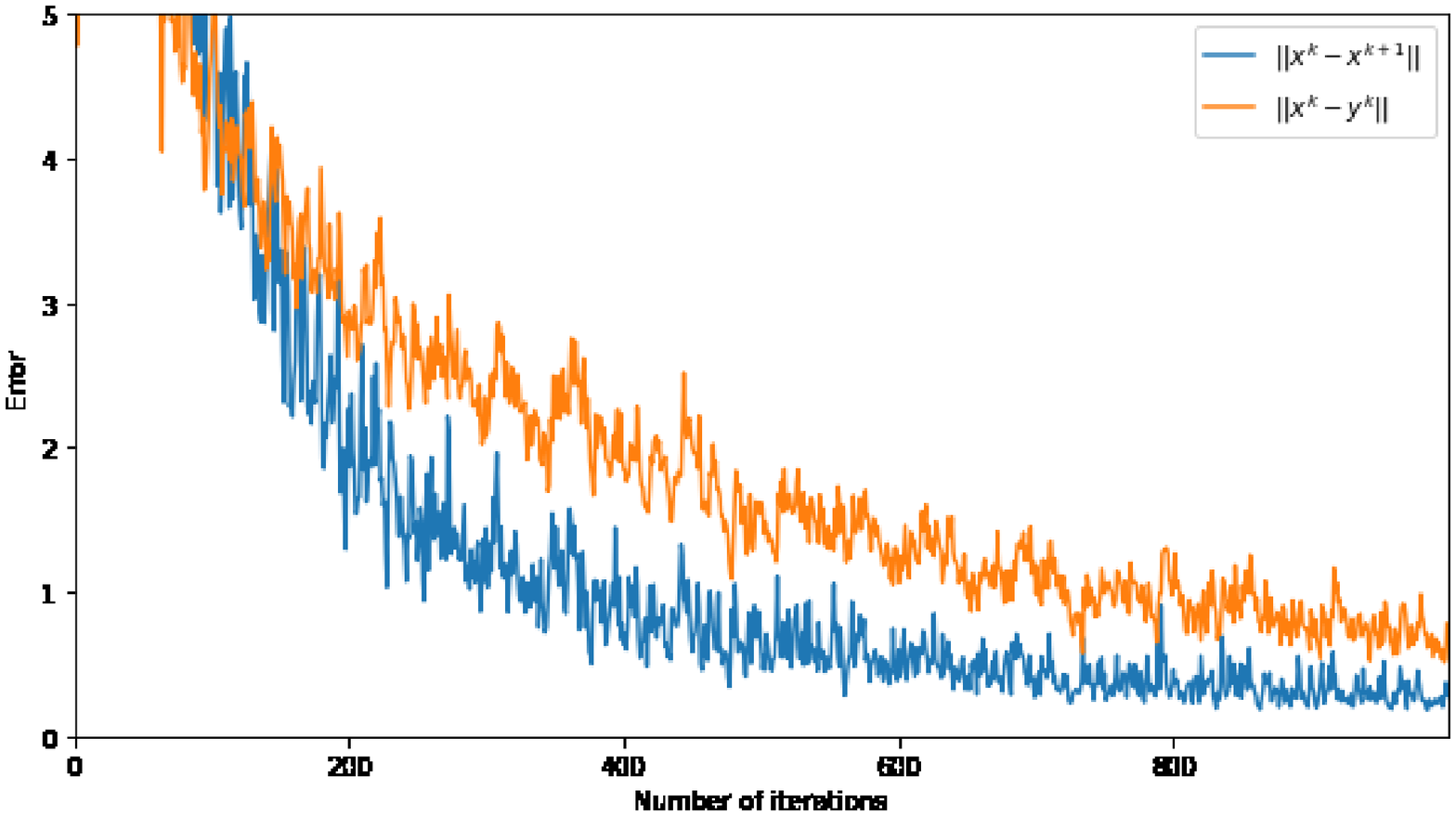}
         \caption{$m=n=50$}
     \end{subfigure}
     \caption{Behavior of LEQEP for random input}
\label{fig4}
\end{figure}

\end{example}

\section*{Conclusion.} We have  proposed an extragradient linesearch algorithm for approximating a solution of  equilibrium problems with quasiconvex bifunctions. The sequence of the iterates generated by the proposed algorithm converges to a proximal-solution when the bifunction is semi-strictly quasiconvex with respect to its second variable, which is an equilibrium solution provided  the bifunction is strongly quasiconvex. Neither monotonicity nor Lischitz properties are required. Thus the algorithm could be considered as an iterative scheme for a solution of the problem considered by K. Fan in \cite{Fa1972} with the bifunction being semi-strictly quasiconvex.


%
%



\end{document}